\numberwithin{equation}{section}
\newtheorem{theorem}{Theorem}[section]
\newtheorem{proposition}[theorem]{Proposition}
\newtheorem{lemma}[theorem]{Lemma}
\newtheorem{remark}[theorem]{Remark}
\DeclareMathOperator*{\argmin}{arg\,min}
\DeclareMathOperator*{\argmax}{arg\,max}
\def\N{\mathbb{N}}
\def\Z{\mathbb{Z}}
\def\R{\mathbb{R}}
\def\C{\mathbb{C}}
\def\cB{\mathcal{B}}
\def\cD{\mathcal{D}}
\def\cH{\mathcal{H}}
\def\cJ{\mathcal{J}}
\def\cK{\mathcal{K}}
\def\cM{\mathcal{M}}
\def\cP{\mathcal{P}}
\def\cS{\mathcal{S}}
\def\cT{\mathcal{T}}
\def\cV{\mathcal{V}}
\def\ri{{\mathrm{i}}}
\def\per{\mathrm{per}}
\def\Tr{\mathrm{Tr} \,}
\def\rt{\mathrm{t}}
\newif \iffigure
\title{Numerical reconstruction of the first band(s) in an inverse Hill's problem.}
\date{}
\author{
Athmane Bakhta\thanks{Ecole des Ponts ParisTech, \texttt{athmane.bakhta@cermics.enpc.fr}} \and Virginie Ehrlacher\thanks{Ecole des Ponts ParisTech \& INRIA, \texttt{virginie.ehrlacher@enpc.fr}} \and David Gontier\thanks{Universit\'e Paris-Dauphine, \texttt{gontier@ceremade.dauphine.fr}} \\
\and
}
\begin{document}
\maketitle

\begin{abstract}
This paper concerns an inverse band structure problem for one dimensional periodic Schr\"odinger operators (Hill's operators). 
Our goal is to find a potential for the Hill's operator in order to reproduce as best as possible some given target bands, which may not be realisable. 
We recast the problem as an optimisation problem, and prove that this problem is well-posed when considering singular potentials (Borel measures). We then propose different algorithms to tackle the problem numerically.
\end{abstract}

\section{Introduction} 
\label{sec:intro}
The aim of this article is to present new considerations on an inverse band structure problem for periodic one-dimensional Schr\"odinger operators, also called Hill's operators.
A Hill operator is a self-adjoint, bounded from below operator of the form 
$A^V:= -\frac{d^2}{dx^2} + V$, acting on $L^2(\R)$, and where $V$ is a periodic real-valued potential. Its spectrum is composed of a reunion of intervals, which can be characterised using 
Bloch-Floquet theory as the reunion of the spectra of a  
family of self-adjoint compact resolvent operators $A^V_q$, indexed by an element $q\in \R$ called the {\em quasi-momentum} or {\em k-point} (see~\cite[Chapter XIII]{ReedSimon} and Section~\ref{sec:Bloch_transform}).
The $m^{th}$ band function associated to a periodic potential is the function which maps $q \in \R$ to the $m^{th}$ lowest eigenvalue of $A^V_q$. The properties of these band functions are well-known,
especially in the one-dimensional case (see e.g.~\cite[Chapter XIII]{ReedSimon}).

\medskip

The inverse band structure problem is an interesting mathematical question of practical interest, which can be roughly formulated as follows: {\em 
is it possible to find a potential $V$ so that its first bands are close to some target functions? }

\medskip

A wide mathematical literature answers the question when the target functions are indeed the bands of some Hill's operator, corresponding to some $V_{\rm ref}$.
In this case, we need to {\em recover} a potential $V$ that reproduces the bands of $V_{\rm ref}$. We refer to~\cite{Eskin,EskinRalstonTrubowiz1, EskinRalstonTrubowiz2, PoschelTrubowiz,FreilingYurko,Veliev} for the case when $V_{\rm ref}$ is a regular potential, and to~\cite{hryniv2003inverse,hryniv2004half,hryniv2004inverse2,hryniv2003inverse3,hryniv2006inverse4} when $V_{\rm ref}$ is singular (see also the review~\cite{Kuchment2016}).
The main ideas of the previous references are as follows. First, the band structure of a Hill's operator can be seen as the transformation of an analytic function. In particular, the knowledge of any band 
on an open set is enough to recover {\em theoretically} the whole band structure. A potential is then reconstructed from the high energy asymptotics of the bands. 

\medskip

The previous methods use the knowledge of the behaviour of the high energy bands, and therefore are unsuitable for practical purpose (material design) since we usually have 
no accurate and numerically stable information about these high energy bands. Moreover, in practice, only the low energy bands are usually of interest. 
The fact that there exists no explicit characterisation of the set of the first band functions associated to a given admissible set of periodic potentials is an additional numerical difficulty. 
For applications, it is therefore interesting to know how to construct a potential such that only its first bands are close to some given target functions, which may not be realisable (for instance not analytic).
In this present work, we therefore adopt a different point of view, which, up to our best knowledge, has not been studied: we recast the inverse problem as an optimisation problem.

\medskip

The outline of the paper is as follows. In Section~\ref{sec:main_results}, we recall basic properties about Hill's operators with singular potentials. 
and we state our main result (Theorem~\ref{thm:minimisers}). Its proof is given in Section~\ref{sec:proofs}. Finally, we present in Section~\ref{sec:numerics} some numerical tests and propose an adaptive optimisation algorithm, which is observed to converge faster than the standard one. This adaptive algorithm relies on the use of an a posteriori error estimator for discretised eigenvalue problems, whose computation is detailed in the Appendix.

\section{Spectral decomposition of periodic Schr\"odinger operators, and main results}
\label{sec:main_results}

In this section, we recall some properties of Hill's operators with singular potentials. Elementary notions on the Bloch-Floquet transform~\cite{ReedSimon} are gathered in Section~\ref{sec:Bloch_transform}. The spectral decomposition
of one-dimensional periodic Schr\"odinger operators with singular potentials is detailed in Section~\ref{sec:singular_potentials}, 
building on the results of~\cite{kato1972schrodinger, hryniv2001schr,gesztesy2006spectral,mikhailets2008, Dias2016}. We state our main results in Section~\ref{ssec:mainResults}.

\subsection{Bloch-Floquet transform}\label{sec:Bloch_transform}

We need some notation. Let $\cD'$ denotes the Schwartz space of complex-valued distributions, and let $\cD'_{\rm per} \subset \cD'$ be the space of distributions that are $2\pi$-periodic. In the sequel, 
the unit cell is $\Gamma:=[-\pi, \pi)$, and the reciprocal unit cell (or Brillouin zone) is $\Gamma^* := [-1/2, 1/2]$. For $u\in \cD'_{\rm per}$ and $k\in \Z$, the 
$k^{th}$ normalised Fourier coefficient of $u$ is denoted by $\widehat{u}(k)$. For $s\in \R$, we denote by 
\begin{equation*}
H^s_{\rm per} := \left \{ u \in \cD'_{\rm per}, \ \| u \|_{H^s_\per}^2 :=  \sum_{k\in \Z} (1+|k|^2)^s|\widehat{u}(k)|^2 < +\infty  \right \}
\end{equation*}
the complex-valued periodic Sobolev space, which is a Hilbert space when endowed with its natural inner product. 
We write $H^s_{\per, r}$ for the {\em real-valued} periodic Sobolev space, i.e.
$$
H^s_{\per, r}:= \left\{ u \in H^s_{\rm per}, \quad \forall k \in \Z, \; \widehat{u}(-k) = \overline{\widehat{u}(k)} \right\}.
$$
We also let $L^2_\per := H^{s = 0}_\per$. From our normalisation, it holds that
\begin{equation*}
\forall v,w \in L^2_{\rm per}, \  \langle v,w \rangle_{L^2_{\rm per}} = \int_\Gamma \overline{v}w 
\quad \text{and} \quad
\forall v,w \in H^1_{\rm per}, \  \langle v,w \rangle_{H^1_{\rm per}} = \int_\Gamma \frac{d\overline{v}}{dx} \frac{dw}{dx}+ \int_\Gamma \overline{v}w.
\end{equation*}
%
Lastly, we denote by $C^0_{\rm per}$ the space of $2\pi$-periodic continuous functions, and by $C^\infty_c$ the space of $C^\infty$ functions over $\R$, with compact support.

\medskip

To introduce the Bloch-Floquet transform, we let $\cH := L^2(\Gamma^*, L^2_\per)$. For any element $f \in \cH$, we denote by $f_q(x)$ its value at the point $(q,x)\in \Gamma^* \times \Gamma$. 
The space $\cH$ is an Hilbert space when endowed with its inner product
\begin{equation*}
\forall f,g \in \cH, \quad \langle f, g \rangle_\cH := \int_{\Gamma^*} \int_\Gamma \overline{f_q(x)} g_q(x) dx \ dq.
\end{equation*}
The Bloch-Floquet transform is the map $\cB : L^2(\R) \to \cH$ defined, for smooth functions $\varphi \in C^\infty_c(\R)$, by
\begin{equation*}
\phi_q(x) := \left( \cB \varphi \right)_q(x) := \sum_{R \in \Z} \varphi(x + R) e^{-\ri q (R+x)}.
\end{equation*}
It is an isometry from $L^2(\R)$ to $\cH$, whose inverse is given by
\begin{equation*}
\left( \cB^{-1} \phi \right)(x) := \int_{\Gamma^*} \phi_q(x) e^{iqx}\,dq = \varphi(x).
\end{equation*}
The Bloch theorem states that if $A$ is a self-adjoint operator on $L^2(\R)$ with domain $D(A)$ that commutes with $\Z$-translations, then $\cB A \cB^{-1}$ is diagonal in the $q$-variable. 
More precisely, there exists a unique family of self-adjoint operators $\left( A_q \right)_{q\in\Gamma^*}$ on $L^2_{\rm per}$ such that for all $\varphi \in L^2(\R) \cap D(A)$, 
$$
(A\varphi)(x) = \int_{\Gamma^*} (A_q\phi_q)(x)\,dq.
$$
In this case, we write
\begin{equation*}
A = \int^\oplus_{\Gamma^*} A_q dq.
\end{equation*}

\subsection{Hill's operators with singular potentials}
\label{sec:singular_potentials}

Giving a rigorous mathematical sense to a Hill's operator of the form $-\frac{d^2}{dx^2} + V$ on $L^2(\R)$, when the potential $V$ is singular is not an obvious task. In the present paper, we consider $V \in H^{-1}_{\rm per,r}$, which is a case that was first tackled in~\cite{kato1972schrodinger} 
(see also~\cite{hryniv2001schr, Dias2016,gesztesy2006spectral,mikhailets2008} for recent results).

%
%

The results which are gathered in this section are direct corollaries of results which were proved in these earlier works, particularly in~\cite{hryniv2001schr}.  

\medskip

\begin{proposition}\label{prop:russe}[Theorem~2.1 and Lemma~3.2 of~\cite{hryniv2001schr}]
For all $V \in H^{-1}_{\rm per,r}$, there exists $\sigma_V \in L^2_{\rm per}$ and $\kappa_V \in \R$ such that 
\begin{equation}\label{eq:nudecomp}
V = \sigma_V' + \kappa_V \mbox{ in } \cD'_{\rm per}.
\end{equation}
Moreover, if $a^V: H^1(\R) \times H^1(\R) \to \C$ is the sesquilinear form defined by 
\begin{equation}\label{eq:adecomp}
\forall v,w\in H^1(\R), \quad a^V(v,w)= \int_\R \frac{d \overline{v}}{dx} \; \frac{d w }{dx}+ \int_\R \kappa_V \overline{v} w - \int_\R \sigma_V \left(\frac{d\overline{v}}{dx}w + \overline{v}\frac{dw}{dx}\right),
\end{equation}
then $a^V$ is a symmetric, continuous sesquilinear form on $H^1(\R)\times H^1(\R)$, which 
is closed and bounded from below. Besides, $a^V$ is independent of the choice of $\sigma_V \in L^2_{\rm per}$ and $\kappa_V\in \R$ satisfying~\eqref{eq:nudecomp}.
\end{proposition}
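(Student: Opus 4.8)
The plan is to proceed in four steps: build the decomposition by hand in Fourier space, establish continuity and symmetry of $a^V$, prove the lower bound together with closedness (the crux), and finally check independence of the splitting.

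First I would construct $(\sigma_V,\kappa_V)$ explicitly from the Fourier coefficients of $V$. Writing $V = \sum_k \widehat{V}(k) e^{\mathrm{i}kx}$ in $\cD'_\per$, I would set $\kappa_V := \widehat{V}(0)$, which is real since $V$ is real-valued, and define $\sigma_V$ by $\widehat{\sigma_V}(k) := \widehat{V}(k)/(\mathrm{i}k)$ for $k \neq 0$ and $\widehat{\sigma_V}(0) := 0$ (any constant being admissible). Then $\sigma_V' + \kappa_V = V$ by construction, and the membership $\sigma_V \in L^2_\per$ follows from $\sum_{k \neq 0} |\widehat{V}(k)|^2/k^2 \lesssim \sum_k (1+k^2)^{-1}|\widehat{V}(k)|^2 < +\infty$, i.e. precisely from $V \in H^{-1}_\per$; the reality constraint $\widehat{\sigma_V}(-k)=\overline{\widehat{\sigma_V}(k)}$ is immediate from that of $V$.

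Next, for continuity and symmetry, the only delicate term is the cross term $\int_\R \sigma_V(\overline{v}'w + \overline{v}\,w')$, since $\sigma_V$ is periodic and hence not in $L^2(\R)$. I would control it by a cell decomposition: splitting $\R = \bigcup_{R \in \Z}(\Gamma + R)$, using periodicity to replace $\|\sigma_V\|_{L^2(\Gamma+R)}$ by $\|\sigma_V\|_{L^2_\per}$ on each cell, the local Sobolev embedding $H^1(\Gamma+R) \hookrightarrow L^\infty(\Gamma+R)$ with a constant uniform in $R$, and a Cauchy--Schwarz summation over $R$ that reconstitutes the global norms $\|v'\|_{L^2(\R)}$ and $\|w\|_{H^1(\R)}$. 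This yields $|\int_\R \sigma_V(\overline{v}'w + \overline{v}w')| \lesssim \|\sigma_V\|_{L^2_\per}\|v\|_{H^1(\R)}\|w\|_{H^1(\R)}$, while continuity of the two remaining terms is clear. Symmetry, $a^V(w,v)=\overline{a^V(v,w)}$, is then a direct consequence of $\kappa_V \in \R$ and of $\sigma_V$ being real-valued.

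The heart of the proof, which I expect to be the main obstacle, is the lower bound. I would sharpen the previous estimate into a form-boundedness statement: for every $\delta > 0$ there is $C_\delta$, depending on $\|\sigma_V\|_{L^2_\per}$, with $|\int_\R \sigma_V(\overline{v}'v + \overline{v}v')| \le \delta\|v'\|_{L^2(\R)}^2 + C_\delta\|v\|_{L^2(\R)}^2$. The gain of the $\|v'\|^2$ term comes from applying the scale-invariant local Sobolev inequality $\|v\|_{L^\infty(\Gamma+R)}^2 \le \varepsilon\|v'\|_{L^2(\Gamma+R)}^2 + C(\varepsilon)\|v\|_{L^2(\Gamma+R)}^2$ on each cell and summing. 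Choosing $\delta < 1$ gives $a^V(v,v) \ge (1-\delta)\|v'\|_{L^2(\R)}^2 + (\kappa_V - C_\delta)\|v\|_{L^2(\R)}^2$, so $a^V$ is bounded from below and coercive modulo $\|\cdot\|_{L^2(\R)}^2$; combined with the continuity bound, this shows the form norm $v \mapsto (a^V(v,v)+\lambda\|v\|_{L^2(\R)}^2)^{1/2}$ is equivalent to the $H^1(\R)$-norm for $\lambda$ large, whence closedness on the form domain $H^1(\R)$. Finally, for independence, $\kappa_V = \widehat{V}(0)$ is forced, while $\sigma_V$ is determined only up to an additive constant $c$; replacing $\sigma_V$ by $\sigma_V + c$ changes $a^V(v,w)$ by $-c\int_\R(\overline{v}w)'$, which vanishes because $v,w \in H^1(\R) \hookrightarrow C_0(\R)$ decay at infinity.
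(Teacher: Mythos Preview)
Your argument is correct. The decomposition via Fourier coefficients, the cell-by-cell bound exploiting periodicity of $\sigma_V$ together with the uniform local embedding $H^1(\Gamma+R)\hookrightarrow L^\infty(\Gamma+R)$, the relative form bound obtained from the $\varepsilon$-version of that embedding, and the integration-by-parts argument for independence (using $H^1(\R)\hookrightarrow C_0(\R)$ so that $\int_\R(\overline{v}w)'=0$) all go through as you describe.

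There is, however, nothing to compare against: the paper does \emph{not} supply a proof of this proposition. It is stated as a direct import from the cited reference (Theorem~2.1 and Lemma~3.2 of Hryniv--Mykytyuk), and the surrounding text makes this explicit (``The results which are gathered in this section are direct corollaries of results which were proved in these earlier works''). What you have written is essentially a self-contained reconstruction of the argument one finds in that reference, specialised to the periodic one-dimensional setting. In that sense your approach is the standard one; the only stylistic difference is that the cited works often phrase the form-boundedness step as ``the multiplication operator by $\sigma_V$ is infinitesimally $(-d^2/dx^2)^{1/2}$-bounded'' and then invoke the KLMN theorem, whereas you prove the coercivity inequality directly. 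Both routes are equivalent here.
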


\begin{remark}
The expression~\eqref{eq:adecomp} makes sense whenever $v, w \in H^1(\R)$. This can be easily seen with the Cauchy-Schwarz inequality, and the embedding $H^1(\R) \hookrightarrow L^\infty(\R)$. It is not obvious how to extend this result to higher dimension.
\end{remark}

A direct consequence of Proposition~\ref{prop:russe} is that one can consider the Friedrichs operator on $L^2(\R)$ associated to $a^V$, which is denoted by $A^V$ in the sequel. 
The operator $A^V$ is thus a densely defined, self-adjoint, bounded from below operator on $L^2(\R)$, with form domain $H^1(\R)$ and whose domain is dense in $L^2(\R)$. Formally, it holds that
\[
A^V = - \dfrac{\partial^2}{\partial x^2}  + V.
\]
The spectral properties of the operator $A^V$ can be studied (like in the case of regular potentials) using Bloch-Floquet theory. 

\medskip

The previous result, together with Bloch-Floquet theory, allows to study the operator $A^V$ via its Bloch fibers $\left(A^V_q\right)_{q\in \Gamma^*}$. For $q\in \Gamma^*$, it holds that $A_q^V$ is the self-adjoint extension of the operator
\begin{equation*}
\left|- i \dfrac{d}{dx} + q\right|^2 + V.
\end{equation*}
It holds that $A_q^V$ is a bounded from below self-adjoint operator acting on $L^2_\per$, whose form domain is $H^1_\per$, and with associated quadratic form $a^V_q$, defined by (recall that $H^1_\per$ is an algebra)
\begin{equation} \label{eq:def:aq}
\forall v,w\in H^1_{\rm per}, \quad a_q^V(v,w):= \int_\Gamma\left[ \overline{\left( - i \frac{d}{dx} + q\right)v}  \left(- i \frac{d}{dx}  + q\right) w \right]  + \langle V , \overline{v}w \rangle_{H^{-1}_{\rm per}, H^1_{\rm per}}. 
\end{equation}
In other words, we have 
$$
A^V = \int_{\Gamma^*}^{\oplus} A^V_q\,dq.
$$

\medskip 

The fact that $L^2_{\rm per}$ is compactly embedded in $H^1_{\rm per}$ implies that $A_q^V$ is compact-resolvent. As a consequence, 
there exists a non-decreasing sequence of real eigenvalues $\left( \varepsilon^V_{q,m}\right)_{m\in \N^*}$ going to $+\infty$ and a corresponding orthonormal 
basis $(u^V_{q,m})_{m\in\N^*}$ of $L^2_{\rm per}$ such that 
\begin{equation} \label{eq:vap1}
\forall m\in \N^*, \quad A^V_q u^V_{q,m} = \varepsilon^V_{q,m} u^V_{q,m}.
\end{equation}
The map $\Gamma^* \ni q \mapsto \varepsilon^V_{q,m}$ is called the $m^{th}$ band. 
Since the potential $V$ is real-valued, it holds that $A^V_{-q} = \overline{A^V_q}$, so that $\varepsilon^V_{-q,m} = \varepsilon^V_{q,m}$ for all $q\in \Gamma^*$ and $m\in\N^*$. This implies that
it is enough to study the bands on $[0, 1/2]$. Actually, we have 
\[
\sigma(A^V) = \bigcup_{q \in [0, 1/2]} \bigcup_{m\in \N^*}\{\varepsilon^V_{q,m}\}.
\]

\medskip

In the sequel, we mainly focus on the first band. We write $\varepsilon_q^V := \varepsilon_{q,1}^V$ for the sake of clarity. Thanks to the knowledge of the form domain of $A^V_q$, we know that
\begin{equation} \label{eq:minmax}
\varepsilon^V_q := \min_{\substack{v \in H^1_\per \\ \| v \|_{L^2_\per} = 1}} a^V_q(v,v).
\end{equation}
This characterisation will be the key to our proof. 
When the potential $V$ is smooth (say $V \in L^2_\per$), 
then the map $\Gamma^* \ni q \mapsto \varepsilon_{q,m}^V$ is analytic on $(-1/2, 1/2)$. Besides, it is increasing on $[0, 1/2]$ if $m$ is odd, and decreasing if $m$ is even (see e.g.~\cite[Chapter XIII]{ReedSimon}).

\subsection{Main results}
\label{ssec:mainResults}
The goal of this article is to find a potential $V$ so that the bands of the corresponding Hill's operator are close to some given target functions. In order to do so, we recast the problem as a minimisation one, of the form
\[
V^* \in \argmin_{V \in \cV} \cJ(V).
\]
Unfortunately, we were not able to consider the full setting where the minimisation set $\cV$ is the whole set $H^{-1}_{\per, r}$. The problem was that we were unable to control the negative part of $V$. To bypass this difficulty, we chose to work with potentials that are bounded from below. Such a distribution is necessary a measure (see e.g.~\cite{lieb2001analysis}). 
Hence measure-valued potentials provide a natural setting for band reconstruction. We recall here some basic properties about measures.

\medskip

We denote by $\cM_{\per}^+$ the space of non-negative $2\pi-$periodic regular Borel measures on $\R$, in the sense that for all $\nu \in \cM_\per^+$, and all Borel set $S \in \cB(\R)$, it holds that 
$\nu(S) = \nu(S + 2 \pi) \ge 0$, and $\nu(\Gamma) < \infty$. For all $\epsilon>0$, from the Sobolev embedding $H_{\rm per}^{1/2 + \varepsilon} \hookrightarrow C^0_\per$, 
we deduce that $\cM_\per^+ \hookrightarrow H_{\rm per}^{-1/2-\varepsilon} \hookrightarrow H_{\rm per}^{-1}$, where the last embedding is compact. 
For $\nu \in \cM_\per^+$, we denote by $V_\nu \in H^{-1}_{\per, r}$ the unique corresponding potential, which is defined by duality through the relation:
\begin{equation*}
\forall \phi \in H^1_\per, \quad \int_\Gamma \phi d \nu = \left\langle V_\nu, \phi \right\rangle_{H^{-1}_\per, H^{1}_\per}.
\end{equation*}

\medskip

For $B \in \R$, we define the set of $B$-bounded from below potentials
$$
\mathcal{V}_B:= \left\{ V \in H^{-1}_{\per, r}| \quad \exists \nu \in  \cM_{\rm per}^+,  \quad V = V_\nu  - B \right\} \subset H^{-1}_{\rm per,r}.
$$
This will be our minimisation space for our optimisation problem. Note that $\cV_{B_1} \subset \cV_{B_2}$ for $B_1 \ge B_2$.

\medskip

We now introduce the functional $\cJ$ to minimise. First, we introduce the set $\cT$ of allowed target functions:
\begin{equation}
\label{def:admissible_set}
\mathcal{T}:= \left\{ b \in C^0(\Gamma^*), \quad b \mbox{ is even and } b \mbox{ is increasing on }[0, 1/2]\right\}.  
\end{equation}
Of course, for all $V \in H^{-1}_{\per, r}$, it holds that $\Gamma^* \ni q \mapsto \varepsilon_q^V \in \cT$. Finally, in order to quantify the quality of reconstruction of a band $b\in \mathcal{T}$, we introduce the error functional $\mathcal{J}_{b}: H^{-1}_{\rm per, r} \to \R$ defined by
\begin{equation}\label{eq:defJ}
\forall V \in H^{-1}_{\rm per, r}, \quad \mathcal{J}_{b}(V):= \frac{1}{2}\int_{\Gamma^*} |b(q) - \varepsilon_{q}^V|^2\,dq = \int_0^{1/2} |b(q) - \varepsilon_{q}^V|^2\,dq.
\end{equation}

\medskip

The main result of the present paper is the following. 
\begin{theorem}
\label{thm:minimisers}
Let $b \in \mathcal{T}$, and denote by $b^*:= \fint_{\Gamma^*} b(q)\,dq \in \R$. Then, for all $B > 1/4 -b^*$, there exists a solution $V_{b,B} \in \mathcal{V}_B$ to the minimisation problem
\begin{equation}\label{eq:minpbm}
V_{b,B} \in \argmin_{V\in \mathcal{V}_B} \mathcal{J}_{b}(V).
\end{equation}
\end{theorem}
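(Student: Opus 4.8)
The natural approach is the direct method of the calculus of variations. Since $\cJ_b \ge 0$, the infimum $m := \inf_{V\in\cV_B}\cJ_b(V)$ is finite; comparing with the free potential $V=-B$, whose first band is $\varepsilon_q^{-B}=q^2-B$, even gives the explicit upper bound $m \le \int_0^{1/2}|b(q)-q^2+B|^2\,dq$. I would then fix a minimising sequence $V_n = V_{\nu_n}-B$ with $\nu_n\in\cM_\per^+$ and masses $M_n := \nu_n(\Gamma)$, and try to extract a subsequence whose limit is again of the form $V_\nu-B$ with $\nu\in\cM_\per^+$.

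The whole difficulty, and the place where the hypothesis $B>1/4-b^*$ must enter, is the a priori control of the masses $M_n$. This is genuinely delicate. On the one hand, adding a non-negative measure only raises the first band: by the min-max formula~\eqref{eq:minmax} and $\langle V_\nu,|v|^2\rangle_{H^{-1}_\per,H^1_\per}\ge 0$, one has $\varepsilon_q^{V_n}\ge q^2-B$ pointwise. On the other hand, a measure concentrating at a point (a large Dirac) drives the first band towards a \emph{flat} band while keeping $\cJ_b$ bounded, so boundedness of $\cJ_b(V_n)$ alone does \emph{not} bound $M_n$, and $\cV_B$ is not closed under such concentration. The role of the hypothesis is to forbid a minimising sequence from degenerating in this way: the flat limits produced by concentration sit at heights $\ge 1/4-B$, the best flat approximation $h$ of $b$ minimises $\int_0^{1/2}|b(q)-h|^2\,dq$ at $h=\max(b^*,1/4-B)$, and the assumption $b^*>1/4-B$ forces the optimal (reachable) height to be the unconstrained mean $b^*$; since $b$ is increasing on $[0,1/2]$, a finite-mass potential producing a band of positive dispersion around $b$ strictly undercuts this flat value, contradicting minimality if $M_n\to+\infty$. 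I expect making this comparison rigorous — identifying the limiting operator when mass escapes to infinity and quantifying the resulting cost — to be the main obstacle; concretely, I would aim to prove that $M_n\to+\infty$ would force $\liminf_n \cJ_b(V_n)>m$, and hence that the $M_n$ are bounded.

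Once the masses are bounded, the rest is routine. Boundedness of $(\nu_n)$ in total variation yields, up to a subsequence, weak-$*$ convergence $\nu_n\rightharpoonup\nu$ with $\nu\in\cM_\per^+$ (non-negativity passes to the limit, so $V_\nu-B\in\cV_B$), and the compact embedding $\cM_\per^+\hookrightarrow H^{-1}_\per$ upgrades this to strong convergence $V_{\nu_n}\to V_\nu$ in $H^{-1}_\per$. It then remains to pass to the limit in the bands. Since $H^1_\per$ is an algebra, the estimate $|\langle V_{\nu_n}-V_\nu,|v|^2\rangle_{H^{-1}_\per,H^1_\per}|\le C\|V_{\nu_n}-V_\nu\|_{H^{-1}_\per}\|v\|_{H^1_\per}^2$ shows that the forms $a_q^{V_n}$ converge to $a_q^{V_\nu-B}$ uniformly on $H^1_\per$-bounded sets; as the ground states are bounded in $H^1_\per$ (by coercivity of the forms from Proposition~\ref{prop:russe} together with the boundedness of $\varepsilon_q^{V_n}$), the min-max formula~\eqref{eq:minmax} gives $\varepsilon_q^{V_n}\to\varepsilon_q^{V_\nu-B}$ for every $q$, with uniform bounds allowing one to pass to the limit in~\eqref{eq:defJ} by dominated convergence. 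Hence $\cJ_b(V_n)\to\cJ_b(V_\nu-B)=m$, so $V_{b,B}:=V_\nu-B$ attains the infimum and solves~\eqref{eq:minpbm}.
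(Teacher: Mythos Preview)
Your outline is correct and matches the paper's proof exactly: direct method, reduction to a mass bound on a minimising sequence, and then Prokhorov plus the compact embedding $\cM_\per\hookrightarrow H^{-1}_\per$ (the paper's Proposition~\ref{prop:cv_measure}) to conclude. The obstacle you flag is precisely what the paper isolates as Proposition~\ref{lem:central} (mass blow-up forces the first band to flatten uniformly to some $\varepsilon\ge 1/4-B$, and every such height is reachable), after which the strict inequality $\inf_{V\in\cV_B}\cJ_b(V)<\cK_b(b^*)$ is obtained by the competitor you anticipate---a large Dirac comb $\lambda\sum_{k}\delta_{2\pi k}$ shifted so that its first band passes through $b^*$ at the unique $q^*$ with $b(q^*)=b^*$, and a direct monotonicity comparison on $[0,q^*-\delta]$, $[q^*-\delta,q^*+\delta]$, $[q^*+\delta,1/2]$.
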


The proof of Theorem~\ref{thm:minimisers} relies on the following proposition, which is central to our analysis. Both the proofs of Theorem~\ref{thm:minimisers} and Proposition~\ref{lem:central} are provided in the next section.

\begin{proposition}\label{lem:central}
Let $B \in \R$ and let $(V_n)_{n\in\N^*} \subset \mathcal{V}_B$. For all $n\in \N^*$, let $\nu_n \in \mathcal{M}_{\rm per}^+$ such that $V_n := V_{\nu_n} - B$. 
Let us assume that the sequence $\left( \varepsilon^{V_n}_{0} \right)_{n\in\N^*}$ is bounded and such that $\displaystyle \nu_n(\Gamma) \mathop{\longrightarrow}_{n\to +\infty} + \infty$. 
Then, up to a subsequence (still denoted $n$), the functions $q \mapsto \varepsilon_{q}^{V_n}$ converge uniformly to a constant function $\varepsilon \in \R$, with $\varepsilon \ge \frac14 - B$. In other words, there is $\varepsilon \ge \frac14 - B$ such that
\begin{equation} \label{eq:convEps}
\max_{q \in [0, 1/2]} \left|  \varepsilon_{q}^{V_n} - \varepsilon \right| \xrightarrow[n \to \infty]{} 0.	
\end{equation}

Conversely, for all $\varepsilon \geq \frac14 - B$, there is a sequence $\left(V_n\right)_{n\in\N^*} \subset \cV_B$ such that~\eqref{eq:convEps} holds.
\end{proposition}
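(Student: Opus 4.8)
The plan is to treat the two assertions separately, working throughout with the form representation $a_q^{V_n}(v,v)=\int_\Gamma\big|(-i\tfrac{d}{dx}+q)v\big|^2+\int_\Gamma|v|^2\,d\nu_n-B$ for normalised $v\in H^1_\per$, together with the variational characterisation~\eqref{eq:minmax}. First I would record two a priori bounds. Taking as trial function the ($L^2_\per$-normalised, real, positive) ground state $u_{n,0}$ of $A_0^{V_n}$, whose probability current vanishes by periodicity, the cross term in the expansion of $a_q^{V_n}(u_{n,0},u_{n,0})$ drops out and yields $\varepsilon_q^{V_n}\le\varepsilon_0^{V_n}+q^2\le\varepsilon_0^{V_n}+\tfrac14$; discarding the non-negative measure term gives $\varepsilon_q^{V_n}\ge-B$. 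Hence the bands are uniformly bounded on $[0,1/2]$, and expanding $a_q^{V_n}(u_{n,q},u_{n,q})$ shows that the fibre minimisers $u_{n,q}$ are bounded in $H^1_\per$, hence (via $H^1_\per\hookrightarrow C^0_\per$) relatively compact in $C^0_\per$; moreover $\int_\Gamma|u_{n,q}|^2\,d\nu_n$ stays bounded.

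To obtain the bound $\varepsilon\ge\tfrac14-B$, I would fix $q$ and extract $u_{n,q}\rightharpoonup u_{*,q}$ weakly in $H^1_\per$ and uniformly, with $\|u_{*,q}\|_{L^2_\per}=1$. The crucial observation is that $u_{*,q}$ must vanish at some point $x_0$: otherwise $|u_{*,q}|\ge m>0$ on $\Gamma$, so $|u_{n,q}|^2\ge m^2/2$ for $n$ large, forcing $\int_\Gamma|u_{n,q}|^2\,d\nu_n\ge\tfrac{m^2}{2}\nu_n(\Gamma)\to+\infty$, a contradiction. Setting $w:=e^{iqx}u_{*,q}$ one has $\int_\Gamma\big|(-i\tfrac{d}{dx}+q)u_{*,q}\big|^2=\int|w'|^2$, and $w$ satisfies Dirichlet conditions at both ends of the cut cell $[x_0,x_0+2\pi]$, whence the Dirichlet inequality gives $\int|w'|^2\ge\tfrac14$. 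By weak lower semicontinuity of the kinetic term and non-negativity of the measure term, $\liminf_n\varepsilon_q^{V_n}\ge\tfrac14-B$.

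For flatness and uniform convergence, note that $A_q^{V_n}=A_0^{V_n}+2q(-i\tfrac{d}{dx})+q^2$ is analytic in $q$ with simple first eigenvalue, so the Feynman--Hellmann formula gives $\partial_q\varepsilon_q^{V_n}=2\big(q-j_n(q)\big)$ with $j_n(q):=\mathrm{Im}\int_\Gamma u_{n,q}\overline{u_{n,q}'}$ the probability current. The $H^1_\per$ bound makes $|\partial_q\varepsilon_q^{V_n}|$ uniformly bounded, so the bands are equi-Lipschitz; being uniformly bounded, a subsequence converges uniformly (Arzel\`a--Ascoli) to some $\ell\in C([0,1/2])$. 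To see $\ell$ is constant I would prove $j_n(q)\to q$: the diverging mass confines $u_{*,q}$ to vanish on the set where $\nu_n$ concentrates, which disconnects the circle, and on each resulting arc the twisted minimiser is, after the gauge $w=e^{iqx}u_{*,q}$, a real Dirichlet eigenfunction, so a direct computation gives $j_*(q)=q\|u_{*,q}\|_{L^2_\per}^2=q$, current convergence following from weak-$H^1_\per$/strong-$L^2_\per$ convergence. Then the total variation $\int_0^{1/2}|\partial_q\varepsilon_q^{V_n}|\,dq=2\int_0^{1/2}|q-j_n(q)|\,dq\to0$, so $\ell$ has vanishing variation, i.e. equals a constant $\varepsilon$, and the previous step gives $\varepsilon\ge\tfrac14-B$; this is~\eqref{eq:convEps}. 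The hard part is exactly this identification of the limiting confinement problem — that the diverging measure imposes Dirichlet conditions at no energy cost and that the limiting fibre minimiser is gauge-equivalent to a real Dirichlet eigenfunction — and propagating it uniformly in $q$ to conclude $j_n(q)\to q$ (equivalently, a vanishing band velocity).

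For the converse, given $\varepsilon\ge\tfrac14-B$ I would choose an arc $I\subset\Gamma$ of length $\ell$ with $(\pi/\ell)^2=\varepsilon+B$ (possible precisely because $\varepsilon+B\ge\tfrac14$), and set $V_n:=V_{\nu_n}-B$ with $\nu_n:=n\,\mathds{1}_{\Gamma\setminus I}\,dx$ extended by periodicity, so that $\nu_n(\Gamma)=n(2\pi-\ell)\to+\infty$. As $n\to\infty$ the growing barrier localises the ground state in $I$ with Dirichlet conditions, making it exponentially small at $\partial I$, so the $q$-dependent quasi-periodic matching becomes negligible; one then checks $\varepsilon_q^{V_n}\to(\pi/\ell)^2-B=\varepsilon$ uniformly in $q$, which is~\eqref{eq:convEps}. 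The a priori bounds, the lower bound, and this converse are comparatively routine, the genuine difficulty residing in the uniform asymptotic flatness of the third step.
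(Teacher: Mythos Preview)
Your plan is reasonable in its overall architecture, and your lower bound and converse are correct (though more elaborate than necessary --- the paper gets $\varepsilon\ge\tfrac14-B$ in one line from $A_q^V\ge q^2-B$ at $q=\tfrac12$, and uses a Dirac comb with an explicit ODE computation for the converse). The substantive issue is in your flatness argument, and it is a genuine gap rather than missing detail.

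You want to conclude that the weak limit $u_{*,q}$ is, after the gauge $w=e^{iqx}u_{*,q}$, a \emph{real Dirichlet eigenfunction} on each arc, so that $j_*(q)=q$. But from weak $H^1_\per$ convergence and boundedness of $\int|u_{n,q}|^2\,d\nu_n$ you only get that $u_{*,q}$ vanishes \emph{somewhere}: $w$ lies in $H^1_0(x_0,x_0+2\pi)$, and weak lower semicontinuity gives $\int|w'|^2\le \ell(q)+B$. What you do \emph{not} get is that $w$ is a minimiser of the Dirichlet quotient. For that you would need the reverse inequality, equivalently $\int|u_{n,q}|^2\,d\nu_n\to 0$ and strong $H^1_\per$ convergence, and nothing in your argument forces this: even after normalising $\tilde\nu_n:=\nu_n/\nu_n(\Gamma)\rightharpoonup\mu$ and deducing $u_{*,q}=0$ on $\mathrm{supp}\,\mu$, you cannot use a fixed Dirichlet eigenfunction $\phi$ supported in $\Gamma\setminus\mathrm{supp}\,\mu$ as a test function, because $\int|\phi|^2\,d\nu_n=\nu_n(\Gamma)\int|\phi|^2\,d\tilde\nu_n$ is of the indeterminate form $\infty\cdot 0$. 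Without identifying the limit problem, $w$ is just some $H^1_0$ function, not a (real) eigenfunction, and $j_*(q)=q$ does not follow. Note also that since $q\mapsto\varepsilon_q^{V_n}$ is increasing, your total-variation integral is just $\varepsilon_{1/2}^{V_n}-\varepsilon_0^{V_n}$, so your route is in fact equivalent to proving this single difference tends to zero --- Feynman--Hellmann buys nothing extra.

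The paper avoids the whole limit-identification problem with a direct test-function trick that you are missing. It works only at $q=0$, where the ground state $u_0^{V_n}$ is real and strictly positive; it shows $\alpha_n:=\min u_0^{V_n}\to 0$ (from $\alpha_n^2\nu_n(\Gamma)\le\varepsilon_0^{V_n}+B$), subtracts $\alpha_n$ and translates so the zero sits at the origin, obtaining $v_n\in H^1_\per$ with $v_n(0)=0$ and $\|v_n\|_{L^2_\per}\to 1$. Then $v_{q,n}(x):=v_n(x)e^{-iq[x]}$ is a legitimate $H^1_\per$ test function for \emph{every} $q$, and a direct computation gives $a_q^{V_n}(v_{q,n},v_{q,n})=a_0^{V_n}(v_n,v_n)=\varepsilon_0^{V_n}+o(1)$, the error terms being controlled by $\alpha_n\int u_0^{V_n}\,d\nu_n$ and $\alpha_n^2\nu_n(\Gamma)$, both shown to vanish. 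Combined with the monotonicity $\varepsilon_0^{V_n}\le\varepsilon_q^{V_n}$, this yields $\max_q|\varepsilon_q^{V_n}-\varepsilon_0^{V_n}|\to 0$ with no limit problem to identify.
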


This result implies that the first band of the sequence of operators $\left(A^{V_n}\right)_{n\in\N^*}$, where $(V_n)_{n\in\N^*}$ satisfies the assumptions of Proposition~\ref{lem:central}, {\em becomes flat}.

\begin{remark} \label{rem:limitNotReachable}
Here we have a sequence of first bands $\left(\varepsilon^{V_n}_q\right)_{n\in\N^*}$ that converges uniformly to a constant function. 
However, as the first band of any Hill's operator must be increasing and analytic, the limit {\em is not} the first band of a Hill's operator.
\end{remark}

\section{Proof of Theorem~\ref{thm:minimisers} and Proposition~\ref{lem:central}}
\label{sec:proofs}

\subsection{Preliminary lemmas}\label{sec:intermediate}

We first prove some intermediate useful lemmas before giving the proof of Proposition~\ref{lem:central} and Theorem~\ref{thm:minimisers}. We start by recording a spectral convergence result.
\begin{proposition}\label{prop:cv}[Theorem~4.1~\cite{hryniv2001schr}]
Let $(V_n)_{n\in\N^*} \subset H^{-1}_{\per,r}$ be a sequence such that $(V_n)_{n\in\N^*}$ converges strongly in $H^{-1}_{\per}$ to some $V\in H^{-1}_{\per,r}$.  Then,
$$
\forall m \in \N^*, \ \max_{ q \in [0, 1/2]} \left| \varepsilon^{V_n}_{q,m} - \varepsilon^V_{q,m} \right| \xrightarrow[n \to \infty]{} 0.
$$
\end{proposition}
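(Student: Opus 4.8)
The plan is to deduce the uniform-in-$q$ convergence of each band directly from the min--max (Courant--Fischer) characterisation of eigenvalues, exploiting that the form $a_q^V$ of \eqref{eq:def:aq} depends on $V$ only through a single linear term. Set $\delta_n := \|V_n - V\|_{H^{-1}_\per}$, so that $\delta_n \to 0$. Since the kinetic part of $a_q^V$ does not involve $V$, for every $v \in H^1_\per$ and every $q \in \Gamma^*$ one has the exact identity
\[
a_q^{V_n}(v,v) - a_q^V(v,v) = \langle V_n - V, |v|^2 \rangle_{H^{-1}_\per, H^1_\per},
\]
so that the entire task reduces to controlling this pairing uniformly in $q$.

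The key estimate is a relative form bound with vanishing relative bound. Because $H^1_\per$ is an algebra and, in one dimension, $\|v\|_{L^\infty} \le C \|v\|_{L^2_\per}^{1/2} \|v\|_{H^1_\per}^{1/2}$, a direct computation (using $\tfrac{d}{dx}|v|^2 = 2\,\mathrm{Re}(\overline v\, \tfrac{dv}{dx})$) gives $\||v|^2\|_{H^1_\per} \le C \|v\|_{L^2_\per}^{1/2} \|v\|_{H^1_\per}^{3/2}$, whence
\[
\left| a_q^{V_n}(v,v) - a_q^V(v,v) \right| \le C\,\delta_n\, \|v\|_{L^2_\per}^{1/2}\, \|v\|_{H^1_\per}^{3/2}.
\]
Writing $t_q(v,v) := \int_\Gamma |(-i\tfrac{d}{dx}+q)v|^2$, the elementary bound $|k|^2 \le C(1+|k+q|^2)$ valid uniformly for $q \in [-1/2,1/2]$ yields $\|v\|_{H^1_\per}^2 \le C_0(\|v\|_{L^2_\per}^2 + t_q(v,v))$ with $C_0$ independent of $q$; the same Gagliardo--Nirenberg inequality shows the potential term is $t_q$-form-bounded with bound $1/2$, so $a_q^V \ge -\beta$ uniformly in $q$ and $\|v\|_{H^1_\per}^2 \le C_1(a_q^V(v,v)+\beta)$ for normalised $v$. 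Combining these, for all normalised $v \in H^1_\per$ and all $q$,
\[
\left| a_q^{V_n}(v,v) - a_q^V(v,v) \right| \le C_2\, \delta_n\, \bigl( a_q^V(v,v) + \beta + 1 \bigr).
\]

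To conclude, I feed this into Courant--Fischer, $\varepsilon^V_{q,m} = \min_{\dim W = m} \max_{v \in W,\, \|v\|_{L^2_\per}=1} a_q^V(v,v)$. Testing with the $m$-dimensional span of the eigenfunctions of $A^V_q$ and using the uniform band bound $M_m := \sup_{q \in [0,1/2]} |\varepsilon^V_{q,m}| < \infty$ (finite since the bands are continuous on the compact $[0,1/2]$), the last display gives $\varepsilon^{V_n}_{q,m} \le \varepsilon^V_{q,m} + C_3 \delta_n$ uniformly in $q$. Since $\delta_n \to 0$ forces $\sup_n \|V_n\|_{H^{-1}_\per} < \infty$, the constants $\beta$ and $M_m$ may be taken uniform in $n$, and running the same argument with the roles of $V_n$ and $V$ exchanged (testing with the eigenfunctions of $A^{V_n}_q$) gives the reverse inequality $\varepsilon^V_{q,m} \le \varepsilon^{V_n}_{q,m} + C_3' \delta_n$. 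Together these produce $\max_{q \in [0,1/2]} |\varepsilon^{V_n}_{q,m} - \varepsilon^V_{q,m}| \le C\, \delta_n \to 0$, as claimed.

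The main obstacle is keeping every constant uniform in $q$ \emph{and} in $n$ simultaneously. The $q$-uniformity comes from the comparison $|k+q|^2 \asymp |k|^2$ for large $|k|$, valid uniformly on $\Gamma^*$, which lets the kinetic form control $\|\cdot\|_{H^1_\per}$ with a $q$-independent constant; the $n$-uniformity needed for the reverse inequality comes from the boundedness of $(\|V_n\|_{H^{-1}_\per})_n$. The single genuinely analytic input is the relative form bound of the second paragraph, whose relative bound is proportional to $\delta_n$ and hence vanishes --- this is precisely what makes the perturbation of the forms, and therefore of all the eigenvalues, disappear in the limit. One could instead invoke a general stability theorem for relatively form-bounded perturbations with vanishing relative bound, but the min--max route yields the uniformity in $q$ transparently.
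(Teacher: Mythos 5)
Your proof is correct, but note that the paper does not actually prove this proposition: it is imported verbatim as Theorem~4.1 of~\cite{hryniv2001schr}, so any proof you give is necessarily a different route from ``the paper's''. The reference establishes the result through the representation $V=\sigma_V'+\kappa_V$ with $\sigma_V\in L^2_\per$ (the paper's Proposition~\ref{prop:russe}) and continuity of the fibered operators with respect to $\sigma_V$, essentially a resolvent-convergence argument; your argument instead runs the standard KLMN-type stability estimate directly through Courant--Fischer. The analytic core of your proof checks out: the identity $a_q^{V_n}(v,v)-a_q^V(v,v)=\langle V_n-V,|v|^2\rangle_{H^{-1}_\per,H^1_\per}$ is immediate from~\eqref{eq:def:aq}; the bound $\||v|^2\|_{H^1_\per}\le C\|v\|_{L^2_\per}^{1/2}\|v\|_{H^1_\per}^{3/2}$ follows from the one-dimensional Gagliardo--Nirenberg inequality exactly as you say; the comparison $|k|^2\le 4|k+q|^2$ for $|k|\ge 1$, $q\in[-1/2,1/2]$, gives the $q$-uniform equivalence of $\|\cdot\|_{H^1_\per}^2$ with $\|\cdot\|_{L^2_\per}^2+t_q(\cdot,\cdot)$; Young's inequality with exponents $4$ and $4/3$ turns the $3/2$-power into an infinitesimal form bound, so $\|v\|_{H^1_\per}^2\le C_1\bigl(a_q^V(v,v)+\beta\bigr)$ for normalised $v$, uniformly in $q$, and uniformly in $n$ for the $V_n$-version since a convergent sequence is bounded in $H^{-1}_\per$; and the two-sided min--max comparison (testing each form on the other operator's first $m$ eigenfunctions, which lie in the common form domain $H^1_\per$) closes the argument. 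Your route buys something the citation does not make explicit: a quantitative, per-band Lipschitz estimate $\max_{q}|\varepsilon^{V_n}_{q,m}-\varepsilon^V_{q,m}|\le C_m\|V_n-V\|_{H^{-1}_\per}$, with an entirely self-contained proof. Two cosmetic points: the finiteness of $M_m=\sup_{q}|\varepsilon^V_{q,m}|$ can be had without invoking continuity of the bands (test the min--max on the span of the first $m$ Fourier modes, whose form values are bounded uniformly on the compact $\Gamma^*$), which keeps the argument free of any appeal to regularity in $q$; and it is worth stating explicitly that the constant $C_m$ degrades with $m$ through $M_m$ --- harmless here, since the proposition is asserted band by band.
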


In our case, since we are working with potentials that are measures, we deduce the following result.
\begin{proposition}\label{prop:cv_measure}
Let $B \in \R$ and $(V_n)_{n\in\N^*} \subset \cV_B$ be a bounded sequence, in the sense
\[
\sup_{n \in \N} \left\langle V_n, \mathds{1}_{\Gamma} \right\rangle_{H^{-1}_\per, H^1_\per} < \infty.
\]
For all $n\in\N^*$, let $\nu_n \in \mathcal{M}_{\rm per}^+$ such that $V_n = V_{\nu_n} -B$. 
Then, there exists $\nu \in \mathcal{M}_{\rm per}^+$ such that, up to a subsequence (still denoted $n$), $(\nu_n)_{n \in \N}$ converges weakly-* to $\nu$ in $\cM_\per$, and 
$(V_n)_{n\in\N^*}$ converges strongly in $H^{-1}_\per$ to $V := V_\nu -B\in \mathcal{V}_B$. Moreover, it holds that
$$
\forall m \in \N^*, \ \max_{ q \in [0, 1/2]} \left| \varepsilon^{V_n}_{q,m} - \varepsilon^V_{q,m} \right| \xrightarrow[n \to \infty]{} 0.
$$
\end{proposition}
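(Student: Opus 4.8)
The plan is to establish the three conclusions in sequence, using compactness of measures, compactness of the embedding $\cM_\per^+ \hookrightarrow H^{-1}_\per$, and finally the spectral stability result of Proposition~\ref{prop:cv}. First I would observe that the boundedness hypothesis is precisely a bound on the total masses: since $\langle V_n, \mathds{1}_\Gamma \rangle_{H^{-1}_\per, H^1_\per} = \langle V_{\nu_n}, \mathds{1}_\Gamma \rangle - B\langle 1, \mathds{1}_\Gamma\rangle = \int_\Gamma 1 \, d\nu_n - 2\pi B = \nu_n(\Gamma) - 2\pi B$, the hypothesis is equivalent to $\sup_n \nu_n(\Gamma) < \infty$. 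Thus the sequence $(\nu_n)_{n\in\N^*}$ is bounded in total variation. By the Banach--Alaoglu theorem (the space of periodic Borel measures being the dual of $C^0_\per$), there exists $\nu \in \cM_\per$ such that, up to a subsequence, $\nu_n \rightharpoonup^* \nu$ weakly-$*$, meaning $\int_\Gamma \phi \, d\nu_n \to \int_\Gamma \phi \, d\nu$ for all $\phi \in C^0_\per$.

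Next I would verify that the limit measure is non-negative, i.e. $\nu \in \cM_\per^+$. This follows because weak-$*$ limits preserve positivity: for any non-negative $\phi \in C^0_\per$, we have $\int_\Gamma \phi \, d\nu = \lim_n \int_\Gamma \phi \, d\nu_n \ge 0$ since each $\nu_n$ is non-negative. Hence $\nu$ is a non-negative periodic Borel measure of finite mass $\nu(\Gamma) \le \liminf_n \nu_n(\Gamma) < \infty$, so $\nu \in \cM_\per^+$ and the candidate limit potential $V := V_\nu - B$ indeed lies in $\cV_B$.

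The key step is to upgrade the weak-$*$ convergence $\nu_n \rightharpoonup^* \nu$ to strong convergence $V_{\nu_n} \to V_\nu$ in $H^{-1}_\per$. Here I would exploit the compactness of the embedding $\cM_\per^+ \hookrightarrow H^{-1}_\per$ noted in the excerpt (which factors through the compact embedding $H^{-1/2-\varepsilon}_\per \hookrightarrow H^{-1}_\per$). Since $(\nu_n)$ is bounded in $\cM_\per^+$, hence bounded in $H^{-1/2-\varepsilon}_\per$, the compact embedding extracts a subsequence converging strongly in $H^{-1}_\per$ to some limit; by uniqueness of limits in the weaker topology (weak-$*$ convergence in measures implies convergence when tested against $H^1_\per \hookrightarrow C^0_\per$ functions, which identifies the strong $H^{-1}_\per$ limit as $V_\nu$), this limit must be $V_\nu$. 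Consequently $V_n = V_{\nu_n} - B \to V_\nu - B = V$ strongly in $H^{-1}_\per$.

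Finally, the uniform convergence of all bands follows immediately by applying Proposition~\ref{prop:cv} to the sequence $(V_n)_{n\in\N^*} \subset H^{-1}_{\per,r}$, which we have just shown converges strongly in $H^{-1}_\per$ to $V \in \cV_B \subset H^{-1}_{\per,r}$; this yields $\max_{q\in[0,1/2]} |\varepsilon^{V_n}_{q,m} - \varepsilon^V_{q,m}| \to 0$ for every $m \in \N^*$. I expect the main obstacle to be the careful justification of the strong $H^{-1}_\per$ convergence: one must argue cleanly that the compactness of $\cM_\per^+ \hookrightarrow H^{-1}_\per$ combined with the already-identified weak-$*$ limit pins down the strong limit as exactly $V_\nu$, rather than merely producing some abstract strong limit point along a further subsequence. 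The standard device is to note that strong $H^{-1}_\per$ limits are in particular distributional limits, and the weak-$*$ convergence against the dense test class $H^1_\per$ forces the distributional limit to coincide with $V_\nu$, so no information is lost along the double extraction.
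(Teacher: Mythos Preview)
Your proof is correct and follows essentially the same route as the paper: extract a weak-$*$ convergent subsequence of measures, upgrade to strong $H^{-1}_\per$ convergence via the compact embedding $\cM_\per \hookrightarrow H^{-1}_\per$, then invoke Proposition~\ref{prop:cv}. The paper cites Prokhorov's theorem on the torus where you invoke Banach--Alaoglu, but on a compact space these amount to the same thing; your version is more explicit about translating the hypothesis into a mass bound, checking non-negativity of the limit, and identifying the strong $H^{-1}_\per$ limit with $V_\nu$, all of which the paper leaves implicit.
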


\begin{proof}
The fact that we can extract from the bounded sequence $(\nu_n)_{n\in\N^*}$ a weakly-* convergent sequence in $\mathcal{M}_{\rm per}^+$ is the Prokhorov's theorem applied in the torus $\Gamma^*$. The second part comes from the compact embedding $\cM_\per \hookrightarrow H^{-1}_{\rm per}$. The final part is the direct application of Proposition~\ref{prop:cv}.
\end{proof}

\begin{remark}
This proposition explains our choice to consider measure-valued potentials. Note that a similar result does not hold in the $L^1_\per$ setting for instance.
\end{remark}

We now give a lemma which is standard in the case of regular potentials $V$ (see~\cite{Evans}).

\begin{lemma}\label{lem:pos}
Let $V \in \cV_B$ for some $B \in \R$. The first eigenvector $u_{q=0}^{V} \in H^1_{\rm per}$ of $A^V_{q = 0}$
is unique up to a global phase. It can be chosen real-valued and    positive.
\end{lemma}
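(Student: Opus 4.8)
The plan is to establish, in order, (i) that we may take the ground state real, (ii) that a nonnegative ground state exists, (iii) that it is strictly positive, and (iv) that these facts force the ground eigenspace to be one-dimensional. Throughout I work at $q=0$, where the quadratic form~\eqref{eq:def:aq} reduces to
\begin{equation*}
a_0^V(v,v) = \int_\Gamma \left| \frac{dv}{dx}\right|^2 + \left\langle V, |v|^2\right\rangle_{H^{-1}_\per, H^1_\per},
\end{equation*}
and $\varepsilon_0^V$ is the minimum~\eqref{eq:minmax}. Note that $|v|^2 = \overline v v \in H^1_\per$ since $H^1_\per$ is an algebra, so the potential term depends only on $|v|^2$; recall also the embedding $H^1_\per \hookrightarrow C^0_\per$ in one dimension, so every eigenfunction is continuous.

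First I would observe that $A^V_0$ is a real operator (the potential is real and the fibre kinetic term is $-d^2/dx^2$ at $q=0$), so if $u$ is a ground state then so is $\overline u$, hence $\mathrm{Re}\,u$ and $\mathrm{Im}\,u$ are real ground states; it therefore suffices to work within the real eigenspace. For a real minimiser $v$ of~\eqref{eq:minmax}, the function $|v|$ again lies in $H^1_\per$ with $\left| \frac{d|v|}{dx}\right| = \left|\frac{dv}{dx}\right|$ almost everywhere, while the potential term is unchanged; hence $a_0^V(|v|,|v|) = a_0^V(v,v)$ and $|v|$ is a nonnegative ground state.

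The key step, and the one I expect to be the main obstacle, is the strict positivity of a nonnegative ground state $u \ge 0$. Here I would exploit the decomposition $V = \sigma_V' + \kappa_V$ of Proposition~\ref{prop:russe} and pass to the quasi-derivative $u^{[1]} := u' - \sigma_V u$, which is absolutely continuous; the eigenvalue equation $A_0^V u = \varepsilon_0^V u$ then becomes the first-order linear system
\begin{equation*}
\begin{pmatrix} u \\ u^{[1]}\end{pmatrix}' = \begin{pmatrix} \sigma_V & 1 \\ -\bigl(\sigma_V^2 + \varepsilon_0^V - \kappa_V\bigr) & -\sigma_V \end{pmatrix}\begin{pmatrix} u \\ u^{[1]}\end{pmatrix},
\end{equation*}
whose coefficients lie in $L^1_{\mathrm{loc}}$ (since $\sigma_V \in L^2_\per$), so Carathéodory uniqueness applies to it. If $u$ vanished at some $x_0$, then $x_0$ would be a minimum of the nonnegative continuous periodic function $u$; a short estimate using $u\ge 0$, the continuity of $u^{[1]}$ and the Cauchy--Schwarz control of $\int_{x_0}^x \sigma_V u$ shows that $u^{[1]}(x_0)=0$ as well (otherwise $u$ would change sign across $x_0$). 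Uniqueness for the system then gives $u\equiv 0$, a contradiction; hence $u>0$ everywhere.

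Finally, uniqueness up to a phase follows by the standard orthogonality argument. Suppose the ground eigenspace had dimension $\ge 2$; then there is a real ground state $u_2$ orthogonal to the positive ground state $u_1>0$. Since $\int_\Gamma u_1 u_2 = 0$ with $u_1>0$, the continuous function $u_2$ must change sign and hence vanish somewhere. But $|u_2|$ is again a nonnegative ground state, so strictly positive by step (iii), contradicting the existence of a zero of $u_2$. Thus the real ground eigenspace is one-dimensional, spanned by $u_1>0$, and writing a general eigenfunction as $\mathrm{Re}\,u + i\,\mathrm{Im}\,u = (\alpha + i\beta) u_1$ shows that $u_{q=0}^V$ is unique up to a global phase and may be chosen real and positive.
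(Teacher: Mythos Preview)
Your proof is correct and complete in outline; the overall architecture (reality, $|u|$ is a minimiser, strict positivity, then the orthogonality argument for simplicity) matches the paper. The genuine difference lies in step~(iii), the strict positivity of a nonnegative ground state.

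The paper does \emph{not} use the quasi-derivative/ODE-uniqueness route you take. Instead it exploits the specific hypothesis $V\in\mathcal V_B$: writing $V=V_\nu-B$ with $\nu\in\mathcal M_\per^+$, it introduces the periodic primitive $f_\nu(x):=F_\nu(x)-\nu(\Gamma)\,x/|\Gamma|$ (with $F_\nu$ the repartition function of $\nu$), observes that $f_\nu\in L^\infty_\per$ and $f_\nu'=V+B-|\Gamma|^{-1}\nu(\Gamma)$ in $H^{-1}_\per$, and rewrites the Euler--Lagrange equation for $u$ in the divergence form
\[
\mathrm{div}\,F(x,u,u')+G(x,u,u')=0,\qquad F(x,u,p)=p-f_\nu u,\quad G(x,u,p)=f_\nu p+\lambda u,
\]
with bounded coefficients. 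It then invokes Trudinger's Harnack inequality for such equations to conclude $u>0$.

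Your argument, by contrast, uses only the general decomposition $V=\sigma_V'+\kappa_V$ with $\sigma_V\in L^2_\per$ from Proposition~\ref{prop:russe}, passes to the first-order system for $(u,u^{[1]})$ with $L^1_{\rm loc}$ coefficients, and applies Carath\'eodory uniqueness. This is more elementary (no Harnack machinery) and actually more general: it proves the lemma for \emph{every} $V\in H^{-1}_{\per,r}$, not only for $V\in\mathcal V_B$. The paper's route, on the other hand, yields a Harnack inequality as an intermediate product and ties in naturally with the measure structure used elsewhere in the paper. One small remark: your claim that $u^{[1]}(x_0)=0$ when $u\ge 0$ attains a zero at $x_0$ is stated a bit tersely; it becomes transparent if you set $w:=u\,\exp\bigl(-\int_{x_0}^{\cdot}\sigma_V\bigr)$, which is $C^1$ with $w'=u^{[1]}\exp(-\int\sigma_V)$, so $w(x_0)=0$ and $w'(x_0)\neq 0$ would force a sign change of $u$.
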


\begin{proof}
We use the min-max principle~\eqref{eq:minmax}, and the fact that, for $u \in H^1_\per$, the following holds 
\[
\left| \dfrac{d}{dx} | u | \right| \le \left| \dfrac{d}{dx} u \right| \quad \text{a.e}.
\]
We see that if $u$ is an eigenvector corresponding to the first eigenvalue, then so is $|u|$. We now consider a non-negative eigenvector $u \ge 0$, 
and prove that it is positive. The usual argument is Harnack's inequality. However, it is a priori unclear that it works in our singular setting. To prove it, we write $V = V_{\nu} - B$ for $\nu \in \cM_\per^+$, 
and consider the repartition function $F_\nu$ of $\nu$: $F_\nu(x) := \nu((0,x])$. This function is not periodic, but the function $f_\nu(x) := F_\nu(x) - \nu(\Gamma)\frac{x}{|\Gamma|}$ is. 
Since $F_\nu$ is an non decreasing, right-continuous function, we deduce that $f_\nu \in L^\infty_\per$. Moreover, it holds, in the $H^{-1}_\per$ sense, that $f_\nu' = V_\nu - | \Gamma |^{-1} \nu(\Gamma) = V + B - | \Gamma |^{-1} \nu(\Gamma)$. As a result, we see that $u$ is solution to the minimisation problem
\begin{equation*}
u \in \argmin_{\substack{v \in H^1_{\per, r} \\ \| v \|_{L^2_{\rm per}} = 1}} \left\{  \int_{\Gamma} \left| \dfrac{d v}{dx} \right|^2 + \left( \frac{\nu(\Gamma)}{|\Gamma|} - B \right) - 2 \int_{\Gamma} f_\nu \left( v \dfrac{dv}{dx} \right) \right\}.
\end{equation*}
There exists $\lambda \in \R$ so that the corresponding Euler-Lagrange equations can be written in the weak-form:
\begin{equation*}
{\rm div} \, F (x,u,u') + G(x,u,u') = 0,
\end{equation*}
with
\begin{equation*}
F(x,u,p) = p - f_\nu u \quad \text{and} \quad B(x,u,p) = f_\nu p + \lambda u.
\end{equation*}
We are now in the settings of~\cite[Theorem 1.1]{Trudinger1967harnack}, and we deduce that $u > 0$. The rest of the proof is standard.
\end{proof}

\subsection{Proof of Proposition~\ref{lem:central}}\label{sec:prooflem}

We now prove Proposition~\ref{lem:central}. Let $B \in \R$ and let $V_n = V_{\nu_n} - B \in \cV_B$ with $\nu_n \in \cM_{\rm per}^+$, be a sequence such that the sequence $\left(\varepsilon_{q = 0}^{V_n}\right)_{n\in \N^*}$ 
is bounded and $\nu_n(\Gamma)$ goes to $+ \infty$. Since $\left(\varepsilon^{V_n}_{0}\right)_{n\in\N^*}$ is bounded, then up to a subsequence (still denoted by $n$), 
there exists $\varepsilon\in \R$ such that $\varepsilon^{V_n}_{0}$ converges to $\varepsilon$. Our goal is to prove that the convergence also holds uniformly in $q \in \Gamma^*$.

\medskip

Let $u^{V_n}_{0} \in H^1_{\rm per}$ be the $L^2_{\rm per}$-normalised    positive eigenvector of $A^{V_n}_0$ associated to the eigenvalue $\varepsilon_{0}^{V_n}$ (see Lemma~\ref{lem:pos}). We denote by $\alpha_n := \min_{x\in \Gamma} u_{0}^{V_n}(x) > 0$. Let us first prove that the following convergences hold:
\begin{equation}
\label{eq:convergence_alpha_n}
\alpha_n \int_{\Gamma} u_{0}^{V_n}\,d\nu_n \xrightarrow[n\to +\infty]{} 0
\quad \text{and} \quad
\alpha_n^2 \nu_n(\Gamma) \xrightarrow[n\to +\infty]{} 0.
\end{equation}

From the equality
\begin{equation*}
\int_\Gamma \left|\frac{d}{dx}\left(u_{0}^{V_n}\right)\right|^2 + \int_\Gamma |u_{0}^{V_n}|^2 d\nu_n = \varepsilon^{V_n}_{0} + B,
\end{equation*}
we get
\begin{equation} \label{eq:series_ineq}
\alpha_n^2 \nu_n(\Gamma) \le \alpha_n \int_\Gamma u_{0}^{V_n} d\nu_n \le \int_\Gamma |u_{0}^{V_n}|^2 d\nu_n \le \varepsilon^{V_n}_{0} + B.
\end{equation}
As the right-hand side is bounded, and $\nu_n(\Gamma) \to +\infty$ by hypothesis, this implies $\alpha_n \to 0$. Moreover, we have
$$
0 \leq \int_\Gamma u_{0}^{V_n} \,d\nu_n = a_0^{V_n}(u_{0}^{V_n},\mathds{1}_\Gamma) + B \int_\Gamma u_{0}^{V_n} = (\varepsilon^{V_n}_{0} + B)\int_\Gamma u_{0}^{V_n} \leq  (\varepsilon^{V_n}_{0} + B) |\Gamma|^{1/2},
$$
where we used the Cauchy-Schwarz inequality for the last part. As a result, we deduce that the sequence $\left(\int_\Gamma u_{0}^{V_n} \,d\nu_n\right)_{n\in\N^*}$ is bounded. The first convergence of~\eqref{eq:convergence_alpha_n} follows. The second convergence is a consequence of the first inequality in~\eqref{eq:series_ineq}.

\medskip  

Let $x_n \in \Gamma = [0,2\pi)$ be such that $\alpha_n = u_{0}^{V_n}(x_n)$. The fact that $\alpha_n \to 0$ 
implies that $\displaystyle l_n:= \|u_{0}^{V_n}(x_n + \cdot) - \alpha_n\|^2_{L^2_{\rm per}} \to 1$ and we can thus define for $n$ large enough
$$
v_n := \frac{u_{0}^{V_n}(x_n + \cdot) - \alpha_n}{\|u_{0}^{V_n}(x_n + \cdot) - \alpha_n\|_{L^2_{\rm per}}}.
$$
It holds that $v_n \in H^1_{\rm per}$, $\|v_n\|_{L^2_{\rm per}} = 1$. Besides, it holds that $v_n(0) = 0$. For $q\in \Gamma^*$, we introduce the function $v_{q,n}$ defined by:
\[
\forall x\in \R, \quad v_{q,n}(x) := v_n(x) e^{-iq [ x ]},  \quad \text{where we set} \quad [ x] := x \ {\rm mod} \ 2 \pi.
\]
Thanks to the equality $v_n(0) = 0$, it holds that $v_{q,n} \in H^1_\per$, and that $\|v_{q,n}\|_{L^2_\per} = 1$. This function is therefore a valid test function for our min-max principle\footnote{This construction only works in one dimension. We do not know how to construct similar test functions in higher dimension.}.

\medskip

From the min-max principle~\eqref{eq:minmax} and the expression~\eqref{eq:def:aq}, we obtain
\begin{align*}
B + \varepsilon_{q}^{V_n} &  \leq 
B + a^{V_n}_q(v_{q,n}, v_{q,n}) \\
& = \int_\Gamma \left|\left( - i \frac{d}{dx} + q \right) v_{q,n} \right|^2 + \int_\Gamma |v_{q,n}|^2 \,d\nu_n 
= \int_\Gamma \left|\frac{dv_n}{dx}\right|^2 + \int_\Gamma |v_n|^2 \,d\nu_n  \\
& = \frac{1}{l_n} \left(\int_\Gamma \left|\frac{d}{dx}\left(u_{0}^{V_n}(x_n + \cdot)\right)\right|^2 + \int_\Gamma |u_{0}^{V_n}(x_n + \cdot) - \alpha_n|^2 \,d\nu_n\right)\\
& = \frac{1}{l_n}\left(\int_\Gamma \left|\frac{d}{dx}\left(u_{0}^{V_n}\right) \right|^2 + \int_\Gamma |u_{0}^{V_n}|^2\,d\nu_n - 2\alpha_n\int_\Gamma u_{0}^{V_n}\,d\nu_n + \alpha_n^2 \nu_n(\Gamma)\right) \\
& = \frac{1}{l_n}\left(B + \varepsilon_{0}^{V_n} - 2\alpha_n\int_\Gamma u_{0}^{V_n}\,d\nu_n + \alpha_n^2 \nu_n(\Gamma)\right).
\end{align*}
We infer from these inequalities, and from (\ref{eq:convergence_alpha_n}) that 
$$
0\leq \max_{q\in \Gamma^*} \left| \varepsilon_{q}^{V_n} - \varepsilon_{0}^{V_n} \right| \leq  \left( B + \varepsilon_{0}^{V_n}\right) \left(\frac{1}{l_n} -1\right) + \frac{1}{l_n}\left(- 2\alpha_n\int_\Gamma u_{0}^{V_n}\,d\nu_n + \alpha_n^2 \nu_n(\Gamma)\right) \xrightarrow[n\to +\infty]{} 0.
$$
This already proves the convergence~\eqref{eq:convEps}.

\medskip

To see that $\varepsilon \ge \frac14 - B$, we write, for $V = V_\nu - B$ with $\nu \in \cM_\per^+$ that
\[
\forall q \in [-1/2, 1/2], \quad	A^V_q = \left| - i \dfrac{d }{dx} + q \right|^2  + V_{\nu} - B \ge \left| - i \dfrac{d }{dx} + q \right|^2 - B \ge q^2 - B,
\]
where we used the fact that the lowest eigenvalue of $\left| - i \dfrac{d }{dx} + q \right|^2$ is $q^2$ for $q \in [-1/2, 1/2]$ (this can be seen with the Fourier representation of the operator). 
As a consequence, for $q = \frac12$, we obtain that for all $V \in \cV_B$, $\varepsilon^V_{q = 1/2} \ge \frac14 - B$. The result follows.

\medskip

To prove the converse, we exhibit an explicit sequence of measures $(\nu_n)_{n\in\N^*} \subset \cM_\per^+$ such that $\varepsilon_{q}^{V_{\nu_n}} \to \frac14$. 
The general result will follow by taking sequences of the form $V_n  = V_{\nu_n} + \left( \varepsilon - \frac14\right) - B$. We denote by $\delta_x$ the Dirac mass at $x \in \R$, and consider, for $\lambda > 0$, the measure
\begin{equation} \label{eq:DiracComb}
\nu_\lambda := \lambda \sum_{k\in \Z} \delta_{2\pi k} \in \mathcal{M}^+_{\rm per}.
\end{equation}
From the first part of the Proposition, it is enough to check the convergence for $q = 0$. We are looking for a solution to (we denote by $\omega_\lambda^2 := \varepsilon_{0}^{V_{\nu_\lambda}} \geq 0$ for simplicity)
\begin{equation} \label{eq:withDirac}
- u'' + \lambda \delta_0 u(0) = \omega_\lambda^2 u, \quad u \ge 0, \quad u(2 \pi) = u(0).
\end{equation}
On $(0, 2\pi)$, $u$ satisfies the elliptic equation $-u'' = \omega_\lambda^2 u$, hence is of the form
\[
u(x) = C e^{i \omega_\lambda x} + D e^{-i\omega_\lambda x},
\]
for some $C,D\in \R$. The continuity of $u$ at $2 \pi$ implies $C e^{2 i \pi \omega_\lambda} + D e^{- 2 i \pi \omega_\lambda} = C + D$. 
Moreover, integrating~\eqref{eq:withDirac} between $0^-$ and $0^+$ leads to the jump of the derivative $- u'(0) + u'(2 \pi) + \lambda u(0)=  0$, or
\[
i \omega_\lambda \left( D - C \right) + i \omega_\lambda \left( C e^{2 i \pi \omega_\lambda} - D e^{-2 i \pi \omega_\lambda} \right) + \lambda (C + D) = 0.
\]
We deduce that $(C,D)$ is solution to the $2 \times 2$ matrix equation
\begin{equation*}
\begin{pmatrix}
1 - e^{2 i \pi \omega_\lambda} & 1 - e^{-2 i \pi \omega_\lambda} \\
- i \omega_\lambda \left( 1 - e^{2 i \pi \omega_\lambda} \right)  + \lambda &  i \omega_\lambda \left( 1 - e^{- 2 i \pi \omega_\lambda } \right)  + \lambda
\end{pmatrix}
\begin{pmatrix}
C\\ 
D
\end{pmatrix} 
= 
\begin{pmatrix}
0 \\ 0
\end{pmatrix} .
\end{equation*}
The determinant of the matrix must therefore vanish, which leads to
\begin{equation*}
1 = \cos(2 \pi \omega_\lambda) + \frac{\lambda}{2} \frac{\sin (2 \pi \omega_\lambda)}{\omega_\lambda}.
\end{equation*}
As $\lambda \to \infty$, one must have $\omega_\lambda \to 1/2$, or equivalently $\varepsilon_{0}^{V_{\nu_\lambda}} \to 1/4$. The result follows.

\subsection{Proof of Theorem~\ref{thm:minimisers}}\label{sec:proofth}

We are now in position to give the proof of Theorem~\ref{thm:minimisers}. Let $b\in \mathcal{T}$ and $B > 1/4 - b^*$ where $b^*:= \fint_{\Gamma^*} b(q)\,dq$. Let $V_n = V_{\nu_n} - B \subset \mathcal{V}_B$ be a minimising sequence associated to problem~\eqref{eq:minpbm}. 

\medskip

Let us first assume by contradiction that $\nu_n(\Gamma) \to \infty$. Then, according to Proposition~\ref{lem:central}, up to a subsequence (still denoted by $n$), there exists $\varepsilon \ge \frac14 - B$ such that $\varepsilon^{V_n}_q$ converges uniformly in $q \in \Gamma^*$ to the constant function $\varepsilon$. Also, from the second part of Proposition~\ref{lem:central}, the fact that $B > \frac14 - b^*$ and the fact that $b^*$ is the unique minimiser to
\begin{equation}\label{eq:minmean}
\inf_{c \in \R} \cK_b(c),
\end{equation}
where $\cK_b(c) := \int_{[0,1/2]} |b(q) - c|^2\,dq$ for all $c\in \R$, it must hold that $\varepsilon = b^*$.

\medskip

We now prove that 
$$
\mathop{\inf}_{V\in \mathcal{V}_B} \mathcal{J}_b(V) \neq \inf_{c \in \R} \cK_b(c) = \cK_b(b^*).
$$
To this aim, we exhibit a potential $W\in \mathcal{V}_B$ such that $\mathcal{J}_b(W) < \mathcal{K}_b(b^*)$. Since $b$ is continuous and    increasing on $[0, 1/2]$, 
there exists a unique $q^* \in (0, 1/2)$ such that $b(q^*) = b^*$. We choose $\delta >0$ small enough such that $ 0 < q^* - \delta < q^* + \delta < 1/2$, and set 
$$
\eta^{\rm ext}:= \int_0^{q^* - \delta} |b(q)-b^*|^2\,dq + \int_{q^*+\delta}^{1/2} |b(q) - b^*|^2\,dq 
\quad \text{and} \quad
\eta^{\rm int}:= \int_{q^* - \delta}^{q^* + \delta} | b(q) -b^*|^2\,dq,
$$
so that $\cK_b(b^*) = \eta^{\rm ext} + \eta^{\rm int}$. Since $b$ is    increasing and continuous, it holds that $\eta^{\rm int}>0$ and $\eta^{\rm ext} >0$, and that $b(q^*-\delta) < b^* < b(q^*+\delta)$.

\medskip

We now choose a constant $\sigma>0$ such that
\[
0 < \sigma <\min \left\{  \frac{\eta^{\rm int}}{8 \delta}, B + b^* - \frac14, b^*- b(q^*-\delta), b(q^*+\delta) -b^* \right\}.
\]
Let $\nu_n$ be the measure defined in~\eqref{eq:DiracComb} for $\lambda = n \in \N$, and let
\[
\widetilde{W}_n := V_{\nu_n} + b^* - \frac14.
\]
Since $\varepsilon_q^{\widetilde{W}_n}$ converges to $b^*$ uniformly in $\Gamma^*$, there exists $n_0 \in \N^*$ large enough such that 
$$
\forall q\in \Gamma^*, \quad \left|\varepsilon_{q}^{\widetilde{W}_{n_0}} - b^*\right| < \sigma/2.
$$
We then define
\[
W := \widetilde{W}_{n_0} + b^* - \varepsilon_{q^*}^{\widetilde{W}_{n_0}} = V_{\nu_n} + \left[ \left( B + b^*- \frac14 \right)  - \left( \varepsilon_{q^*}^{\widetilde{W}_{n_0}} - b^* \right) \right] - B.
\]
Since $\sigma < B + b^* - 1/4$, it holds that $W \in \cV_B$. Moreover, it holds that $b^* - \sigma < \varepsilon_{q}^W  < b^* + \sigma$ for all $q \in \Gamma^*$. Finally, for $q = q^*$, we have $\varepsilon^W_{q^*} = b^*$.

\medskip

Let us evaluate $\mathcal{J}_b(W)$. We get
$$
\mathcal{J}_b(W) = \int_0^{q^* - \delta} |b(q) - \varepsilon_{q}^W|^2\,dq +  \int_{q^* - \delta}^{q^* + \delta} |b(q) - \varepsilon_{q}^W|^2\,dq + \int_{q^* + \delta}^{1/2} |b(q) - \varepsilon_{q}^W|^2\,dq.
$$
For the first part, we notice that for $0 \leq q < q^* - \delta$, we have 
\[
b(q) < b(q^* - \delta) < b^* - \sigma <  \varepsilon_{q}^W <  \varepsilon_{q^*}^W = b^*.
\]
This yields that
$$
\forall \ 0 \leq q < q^* - \delta, \quad |b(q) - \varepsilon_{q}^W| = \varepsilon_{q}^W - b(q) < b^* - b(q) = |b(q) - b^*|.
$$
Integrating this inequality leads to
$$
\int_0^{q^* - \delta} |b(q) - \varepsilon_{q}^W|^2\,dq  < \int_0^{q^* - \delta} |b(q) - b^*|^2\,dq.
$$
Similarly, we obtain that
$$
\int_{q^* + \delta}^{1/2} |b(q) - \varepsilon_{q}^W|^2\,dq  < \int_{q^* + \delta}^{1/2} |b(q) - b^*|^2\,dq.
$$
Lastly, for the middle part, we have
$$
\int_{q^* - \delta}^{q^* + \delta} |b(q) - \varepsilon_{q}^W|^2\,dq  < 2 \delta \left[ \varepsilon_{q^*+\delta}^W - \varepsilon_{q^*-\delta}^W \right] \leq  4\delta \sigma \leq \frac{\eta^{\rm int}}{2} <  \int_{q^* - \delta}^{q^* + \delta} |b(q) - b^*|^2\,dq.
$$
Combining all these inequalities yields that $\mathcal{J}_b(W) < \mathcal{K}_b(b^*)$. This contradicts the minimising character of the sequence $(V_n)_{n\in \N^*}$.

\medskip

Hence the sequence $\left(\nu_n(\Gamma)\right)_{n\in\N^*}$ is bounded. The proof of Theorem~\ref{thm:minimisers} then follows from Proposition~\ref{prop:cv_measure}.

\section{Numerical tests} \label{sec:numerics} 
In this section, we present some numerical results obtained on different toy inverse band structure problems. We propose an adaptive
optimisation algorithm in which the different discretisation parameters are progressively increased. Such an approach, although heuristic, shows a significant gain 
in computational time on the presented test cases in comparison to a naive optimisation approach. 

\medskip

In Section~\ref{sec:discretisation}, we present the discretised version of the inverse band problem for multiple target bands. We present the different optimisation procedures used for this problem (direct 
and adaptive) in Section~\ref{sec:optimisation}. Numerical results on different test cases are given in Section~\ref{sec:numerical_results}. 
The reader should keep in mind that although the proof given in the previous section only works for the reconstruction of the first band, it is possible to numerically look for methods that reproduce several bands.

\subsection{Discretised inverse band structure problem}\label{sec:discretisation}
For $k\in \Z$, we let $e_k(x):= \frac{1}{\sqrt{2\pi}}e^{ikx}$ be the $k$-th Fourier mode. For $s\in \N^*$, we define by
\begin{equation}
\label{def:X_s}
X_s:= \mbox{\rm Span} \left\{ e_k, \ k \in \Z, \ |k| \leq s \right\}
\end{equation}
the finite dimensional space of $L^2_\per$ consisting of the $N_s := 2s+1$ lowest Fourier modes. We denote by $\Pi_{X_s}: L^2_{\rm per} \to X_s$ the $L^2_{\rm per}$ orthogonal projector onto $X_s$. In practice, the solutions of the eigenvalue problem (\ref{eq:vap1}) are approximated using a Galerkin method in $X_s$. We denote by $\varepsilon^{V,s}_{q,1} \leq \cdots \leq \varepsilon^{V,s}_{q,N_s}$ the eigenvalues (ranked in increasing order, counting multiplicity) of the operator $A_q^{V,s}:= \Pi_{X_s} A_q^V\Pi_{X_s}^*$. We also 
denote by $(u_{q,1}^{V,s}, \cdots, u_{q,N_s}^{V,s})$ an orthonormal basis of $X_s$ composed of eigenvectors associated to these eigenvalues so that
\begin{equation}\label{eq:eigdisc}
\forall 1\leq j \leq N_s, \quad A_q^{V,s}u_{q,j}^{V,s} = \varepsilon^{V,s}_{q,j}u_{q,j}^{V,s}.
\end{equation}
An equivalent variational formulation of (\ref{eq:eigdisc}) is the following: 
$$
\forall 1\leq j \leq N_s, \quad \forall v \in X_s, \quad a_q^V\left(u_{q,j}^{V,s},v\right) = \varepsilon^{V,s}_{q,j} \left\langle u_{q,j}^{V,s}, v \right\rangle_{L^2_{\rm per}}.
$$
As $s$ goes to $+ \infty$, it holds that $\displaystyle \varepsilon^{V,s}_{q,m} \mathop{\longrightarrow}_{s\to +\infty}  \varepsilon^{V}_{q,m}$. 

\medskip

In order to perform the integration in~\eqref{eq:defJ}, we discretise the Brillouin zone. We use a regular grid of size $Q \in \N^*$, and set
\[
\Gamma^*_Q:= \left\{ -\frac12 + \frac{j}{Q}   , \ j \in \{ 0, \cdots, Q-1\} \right\}.
\]
We emphasise that since the maps $q \mapsto \varepsilon_{q,m}$ are analytic and periodic, the discretisation error coming from the integration will be exponentially small with respect to $Q$. In practice, we fix $Q \in \N^*$.

\medskip

Let $M\in\N^*$ be a desired number of targeted bands and $b_1,\cdots, b_M \in C^0_\per$ be real-valued even functions, and such that $b_m$ is increasing when $m$ is odd and decreasing when $m$ is even. Our cost functional is therefore $\cJ: H^{-1}_{\per, r} \to \R$, defined by
$$
\forall V\in H^{-1}_{\rm per,r}, \quad \cJ(V) :=   \frac{1}{Q}\sum_{q\in \Gamma^*_Q} \sum_{m=1}^M |b_m(q) - \varepsilon_{q,m}^{V}|^2.
$$
Its discretised version, when the eigenvalues problems are solved with a Galerkin approximation, is
$$
\forall s\in \N^*, \quad \forall V\in H^{-1}_{\rm per,r}, \quad \cJ^s(V) :=   \frac{1}{Q}\sum_{q\in \Gamma^*_Q} \sum_{m=1}^M |b_m(q) - \varepsilon_{q,m}^{V,s}|^2.
$$

Recall that our goal is to find a potential $V \in H^{-1}_{\rm per,r}$ which minimise the functional $\cJ^s$. In practice, 
an element $V\in H^{-1}_{\rm per,r}$ is approximated with a finite set of Fourier modes. For $p\in \N^*$, we denote by
\begin{equation}
\label{def:Y_p}
Y_p:= {\rm Span}\left\{ \sum_{k\in \Z, \; |k|^2\leq p} \widehat{V}_k e_k, \ \forall k \in \Z, \ |k| \leq p, \ \overline{\widehat{V}_{-k}} = \widehat{V}_k \right\}.
\end{equation}
Altogether, we want to solve
\[
V^{s,p} := \argmin_{V \in Y_p} \cJ^s(V).
\]

\subsection{Algorithms for optimisation procedures}\label{sec:optimisation}

\subsubsection{Naive algorithm}

We first present a naive optimisation procedure, using a gradient descent method, where the parameters $s$ and $p$ are fixed beforehand. We tested three different versions of the gradient descent algorithm: 
steepest descent (\textbf{SD}), conjugate gradient with Polak Ribiere formula (\textbf{PR}) and quasi Newton with the Broyden-Fletcher-Goldfarb-Shanno formula (\textbf{BFGS}). We do not detail here these 
classical descents and corresponding line search routines for the sake of conciseness and refer the reader to~\cite{BakhtaPhD,Bonnans}. 

\medskip

For all $V \in H^{-1}_{\rm per,r}$, there exists real-valued coefficients $\left(c_k^V\right)_{k\in\N}$ and $\left(d_k^V\right)_{k\in \N^*}$ such that 
$$
V(x) = c^V_0 + \sum_{k\in \N^*} c^V_k \cos(kx) + d_k^V \sin(kx), \quad \mbox{ and } \sum_{k\in\N^*} (1 + |k|^2)^{-1} \left( |c_k^V|^2 + |d_k^V|^2 \right) < +\infty.
$$
For all $k\in \N$ (respectively $k\in \N^*$), we can express the derivative $\partial_{c_k^V} \cJ^s(V)$ (respectively $\partial_{d_k^V} \cJ^s(V)$) exactly in terms of the Bloch eigenvectors $u_{q,m}^{V,s}$. Indeed, it holds that
\[
\partial_{c_k^V} \cJ^s(V) = \frac{1}{Q}\sum_{q\in \Gamma^*_Q} \sum_{m=1}^M 2 \left( \varepsilon_{q,m}^{V,s} - b_m(q) \right) \partial_{c_k^V} \left( \varepsilon_{q,m}^{V,s} \right).
\]
On the other hand, from the Hellman-Feynman theorem, it holds that
\[
\partial_{c_k^V} \left( \varepsilon_{q,m}^{V,s} \right) = \left\langle u_{q,m}^{V,s}, \partial_{c_k^V} A^V_q, u_{q,m}^{V,s} \right\rangle = \langle u_{q,m}^{V,s}, \cos(k\cdot) u_{q,m}^{V,s}\rangle_{L^2_{\rm per}}.
\]
Similarly, for all $k\in \N^*$, 
\[
\partial_{d_k^V} \left( \varepsilon_{q,m}^{V,s} \right) = \left\langle u_{q,m}^{V,s}, \partial_{d_k^V} A^V_q, u_{q,m}^{V,s} \right\rangle = \langle u_{q,m}^{V,s}, \sin(k\cdot) u_{q,m}^{V,s}\rangle_{L^2_{\rm per}}.
\]

In the rest of the article, for all $p\in\N^*$, we will denote by $\nabla \mathcal{J}^s(V)|_{Y^p}$ the $2p+1$-dimensional real-valued vector so that
$$
\nabla \mathcal{J}^s(V)\big|_{Y^p} = \left( \partial_{d_p^V} \cJ^s(V), \partial_{d_{p-1}^V} \cJ^s(V), \cdots, \partial_{d_1^V} \cJ^s(V), \partial_{c_0^V} \cJ^s(V), \partial_{c_1^V} \cJ^s(V), \cdots, \partial_{c_p^V} \cJ^s(V) \right).
$$

\medskip

In order for the reader to better compare our adaptive algorithm with this naive one, we provide its pseudo-code below (Algorithm~\ref{algo:descent}).

\begin{algorithm}[H]
\textbf{\\Input:} \\
$p,s \in \N^*$\;
$W_0 \in Y_p$ : initial guess\;
$\varepsilon > 0 $: prescribed global precision\;
$\nu > 0$: tolerance for the norm of the gradient\;
\textbf{\\Output:} \\
$W_* \in Y_p$ such that $\| \nabla \cJ^s(W_*)\big|_{Y_p} \| \leq \nu $\;
\textbf{\\ Instructions:} \\
$n = 0$, $W = W_0$\;
\While{ $\| \nabla \cJ^{s}(W)\big|_{Y_p}| \| > \nu $}{
compute a descent direction  $D \in Y_p$ at $\cJ^{s}(W)$ (using \textbf{SD} / \textbf{PR} / \textbf{BFGS})\;
choose $t\in \R$ so that $\displaystyle t \in \mathop{\rm argmin}_{\overline{t}\in\R} \cJ^s(W + \overline{t}D)$\;
set $W \leftarrow W + t D$\;
}
return $W_* = W$.  
\caption{Naive optimisation algorithm}
\label{algo:descent}
\end{algorithm}

Although this method gives satisfactory numerical optimisers as shown in Section~\ref{sec:numerical_results}, its computational time grows very quickly with the discretisation parameters $p$ and $s$.
Besides, it is not clear how these parameters should be chosen a priori, given some target bands. This motivates the design of an adaptive algorithm.

\subsubsection{Adaptive algorithm}

In order to improve on the efficiency of the numerical optimisation procedure, we propose an adaptive algorithm, where the discretisation parameters $s$ or $p$ are increased during the optimisation process. To describe this procedure, we introduce two criteria to determine whether $s$ or $p$ need to be increased during the algorithm.

\medskip

As the parameter $s$ is increased, the approximated eigenvalues $\varepsilon_{q,m}^{V,s}$ becomes more accurate, and the discretised cost functional $\cJ^s$ gets closer to the true one $\cJ$. Our criterion for $s$ relies on the use of an a posteriori error estimator for the eigenvalue problem (\ref{eq:eigdisc}). More precisely, assume we can calculate at low numerical cost an estimator $\Delta_{q,m}^{V,s}\in \R_+$ such that
$$
|\varepsilon^V_{m,q} - \varepsilon_{m,q}^{V,s}| \leq \Delta_{q,m}^{V,s},
$$
(see Appendix~\ref{sec:appendixA}), then we would have that
\begin{align*}
| \mathcal{J}(V) - \mathcal{J}^s(V)| &  = \left|  \frac{1}{Q} \sum_{q\in\Gamma_Q^*} \sum_{m=1}^M \left( | b_m(q) - \varepsilon_{q,m}^V|^2 - | b_m(q) - \varepsilon_{q,m}^{V,s}|^2 \right) \right| \\
& = \left| \frac{1}{Q} \sum_{q\in\Gamma_Q^*} \sum_{m=1}^M \left( 2b_m(q) - \varepsilon_{q,m}^V - \varepsilon_{q,m}^{V,s}\right) \left( \varepsilon_{q,m}^{V,s} - \varepsilon_{q,m}^{V} \right) \right| \\
& \leq \frac{1}{Q} \sum_{q\in\Gamma_Q^*} \sum_{m=1}^M  \left( 2\left| b_m(q) - \varepsilon_{q,m}^{V,s}\right| + \Delta_{q,m}^{V,s}\right)\Delta_{q,m}^{V,s} =: \cS_V^s.
\end{align*}
The quantity $\cS_V^s$ estimates the error between $\mathcal{J}(V)$ and $\mathcal{J}^s(V)$ and therefore gives information on the necessity to adapt the value of the discretisation parameter $s$. 

\medskip

We now derive a criterion for the parameter $p$. When this parameter is increased, the minimisation space $Y_p$ gets larger. A natural way to decide whether or not to increase $p$ is therefore to consider the gradient of $\cJ^s$,
at the current minimisation point $W \in Y_p$, but calculated on a larger subspace $Y_{p'} \supset Y_p$ with $p' > p$.

\medskip

In practice, the natural choice $p' = p+1$ is inefficient. This is not a surprise, as there is no reason a priori to expect a sudden change at exactly the next Fourier mode. We therefore took the heuristic choice $p' = 2p$. More specifically, we define
\[
\cP^p_V := \left\| \nabla_V \cJ^s(V) \big|_{Y_{2p}} \right\|.
\]
Note that this estimator needs to be computed only when $V$ is a local minimum of $\cJ^s$ on $Y_p$. When this estimator is larger than some threshold, we increase $p$ so that the new space $Y_p$ contains the Fourier mode which provides the highest contribution in $ \left( \nabla_V \cJ^s(V) \right) \big|_{Y_{2p}}$.


\medskip

The adaptive procedure we propose is described in details in Algorithm~\ref{algo:adaptive}: 

\begin{algorithm}[H]

\textbf{\\Input:} \\
$p_0, s_0 \in \N^*$ : initial discretisation parameters\; 
$W_0 \in Y_{p_0}$ : initial guess\;
$\eta > 0 $: global discretisation precision\;
$\nu > 0 $: gradient norm precision\;

\textbf{\\Output:} \\
$p \geq p_0$, $s\geq s_0$ :  final discretisation parameters\;  
$W_* \in Y_p$ such that $\| \nabla \cJ^{s}(W_*)\big|_{Y_p} \|  \leq \nu$, $\cS^s_{W_*} \leq \eta$ and $\cP^p_{W_*} \leq \eta$\;

\textbf{\\ Instructions:} \\
$n = 0$, $W = W_0$\;
\While{$\| \nabla \cJ^{s}(W)\big|_{Y_p} \| > \nu$ or $\cS_{W}^{s} > \eta$ or $\cP^p_{W} > \eta$}{

\While{$\| \nabla \cJ_{p}^{s}(W)\big|_{Y_p} \| > \nu$}{
compute a descent direction  $D\in Y_{p}$ at $\cJ^{s}(W)$ (using \textbf{SD} / \textbf{PR} / \textbf{BFGS})\;
choose $t\in \R$ so that $\displaystyle t \in \argmin_{\overline{t}\in\R} \cJ^{s}(W + \overline{t}D)$\;
set $W \leftarrow W + t D$\;
}

\If{$\cS_{W}^{s} > \eta$ }{
set $s \leftarrow s + 1$\; 
}
\ElseIf{$\cP_{W}^{p} > \eta$}{
set $\displaystyle p \leftarrow \argmax_{p<\overline{p} \leq 2p} \quad \max \left( \left| \partial_{d_{\overline{p}}^V}\cJ^{s}(W)\right| , \left| \partial_{c_{\overline{p}}^V}\cJ^{s}(W)\right|\right)$\;
}
}
return $W_*= W$.
\caption{Adaptive optimisation algorithm}
\label{algo:adaptive}
\end{algorithm}

\subsection{Numerical results} 
\label{sec:numerical_results}

In this section, we illustrate the different algorithms presented above. 

\medskip

We consider the case where the target functions come from a target potential $V_{\rt} \in Y_{p_{\rt}}$, whose Fourier coefficients are randomly chosen for some $p_t\in \N^*$. 
We therefore take $b_m(q) := \varepsilon_{q,m}^{V_{\rt}, s_{\rt}}$, and try to recover the first $M$ functions $b_m$. The numerical parameters are $M = 3$, $Q = 25$, $\nu = 10^{-5}$, $\eta = 10^{-6}$ and $s_\rt = 20$. 
The initial guess is $W_0 = 0$. The naive algorithms are run with $s = s_\rt$ and $p=p_\rt$, while the adaptive algorithms start with $s_0 = p_0 = 1$. In addition, the a posteriori estimator is obtained with $s_{\rm ref}=250$ 
and $\theta = 0.01$ (see Appendix~\ref{sec:appendixA}). All tests are done with the naive and adaptive algorihms, with steepest descent (\textbf{SD}), conjugate gradient with Polak Ribiere formula (\textbf{PR}) 
and quasi Newton with the Broyden-Fletcher-Goldfarb-Shanno formula (\textbf{BFGS}).

\medskip

In our first test, we try to recover a simple shifted cosine function (i.e. $p_\rt = 1$). Results are shown in Figure~\ref{fig:1D_simple}. We observe that the bands and the potential are well reconstructed. 
We also notice that the adaptive algorithm takes more iterations to converge. However, as we will see later, most iterations are performed for low values of the parameters $s$ and $p$, and therefore are 
usually faster in terms of CPU time (see Table~\ref{tab:results_1D} below).
\begin{figure}[ht]
\centering 
\captionsetup{justification=centering}
\subfloat[Potentials]{\includegraphics[height=6cm, width=8cm]{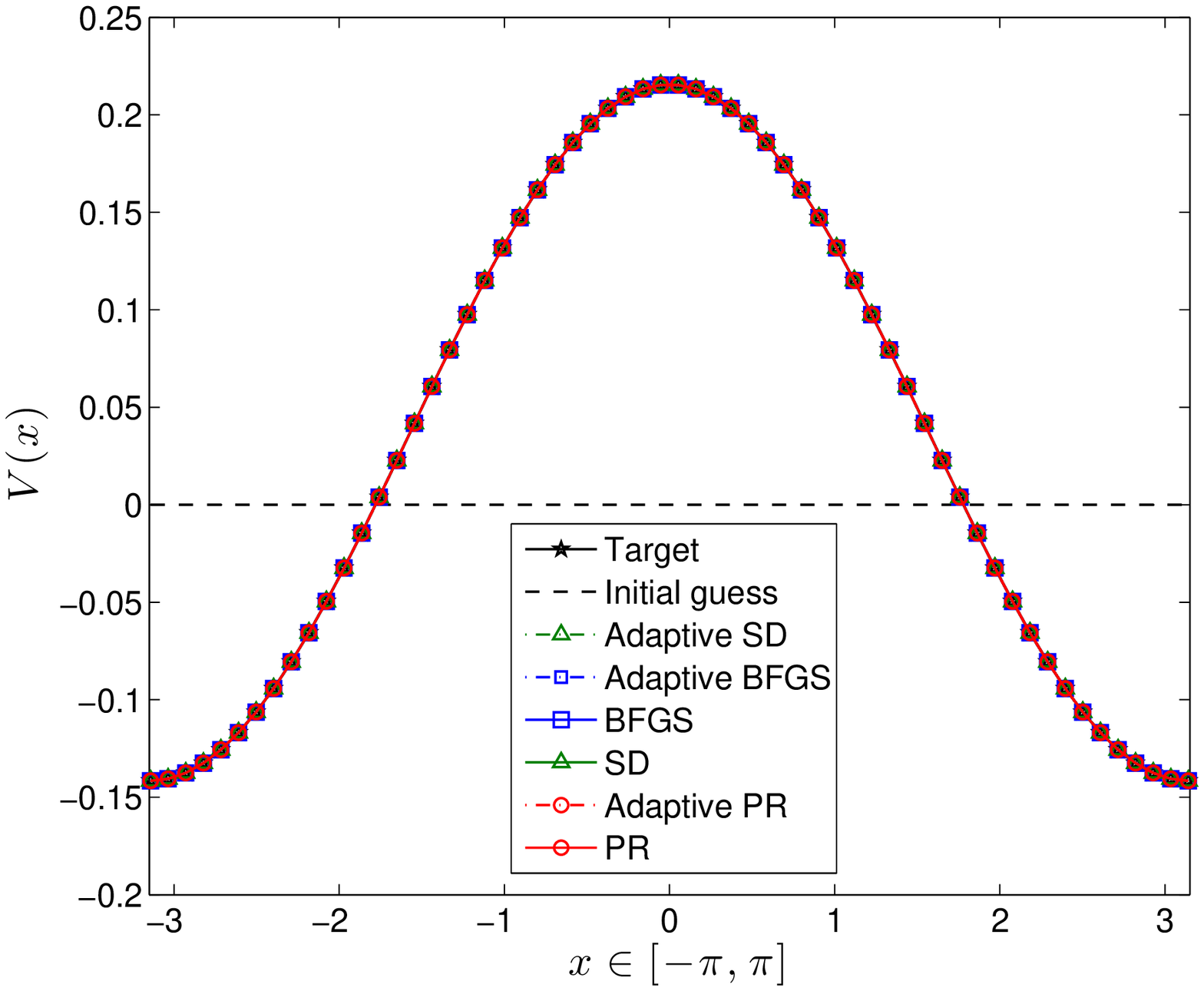}}
\subfloat[Bands]{\includegraphics[height=6cm, width=8cm]{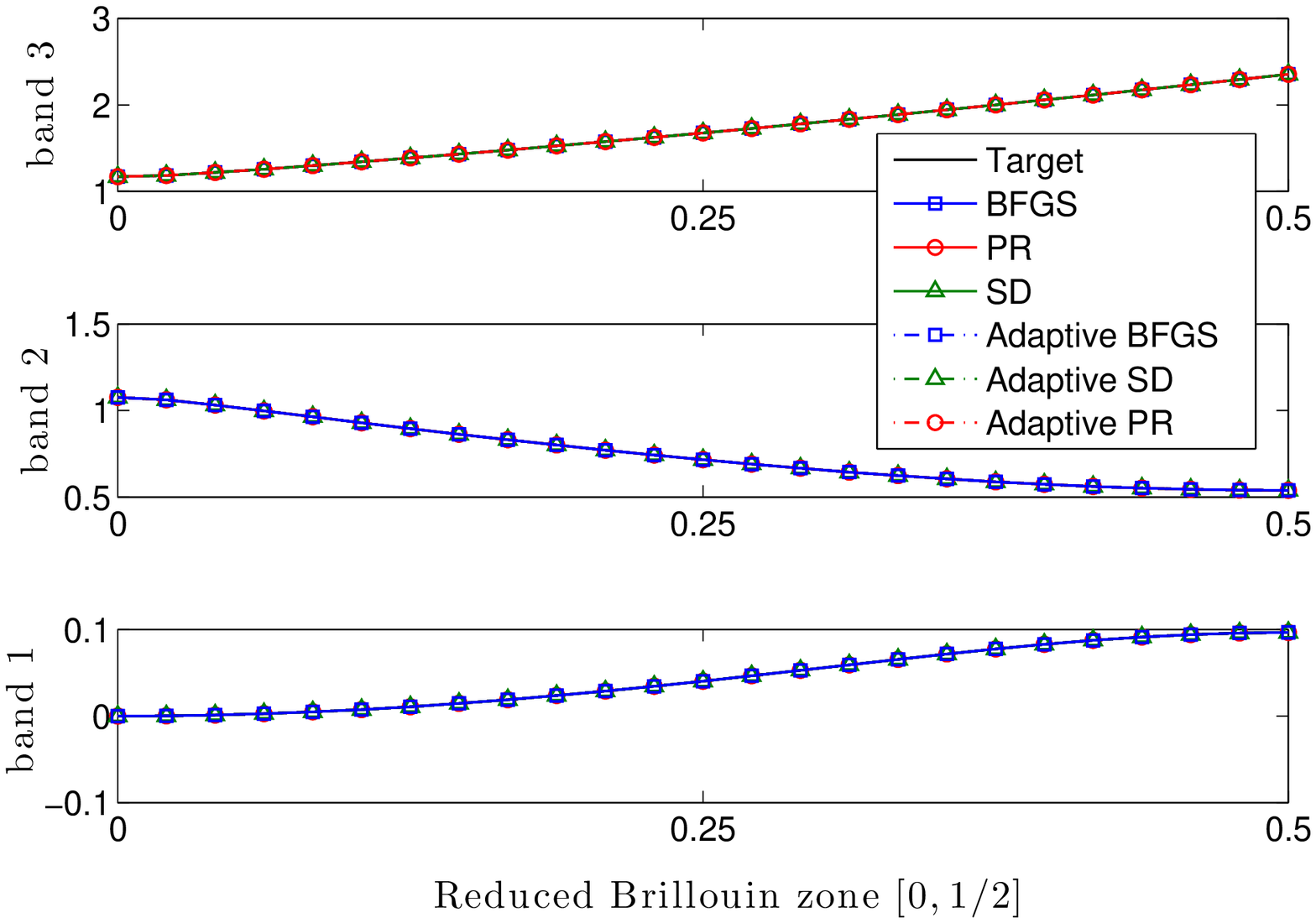}}\\ 
\subfloat[Evolution of $s$]{\includegraphics[height=6cm, width=8cm]{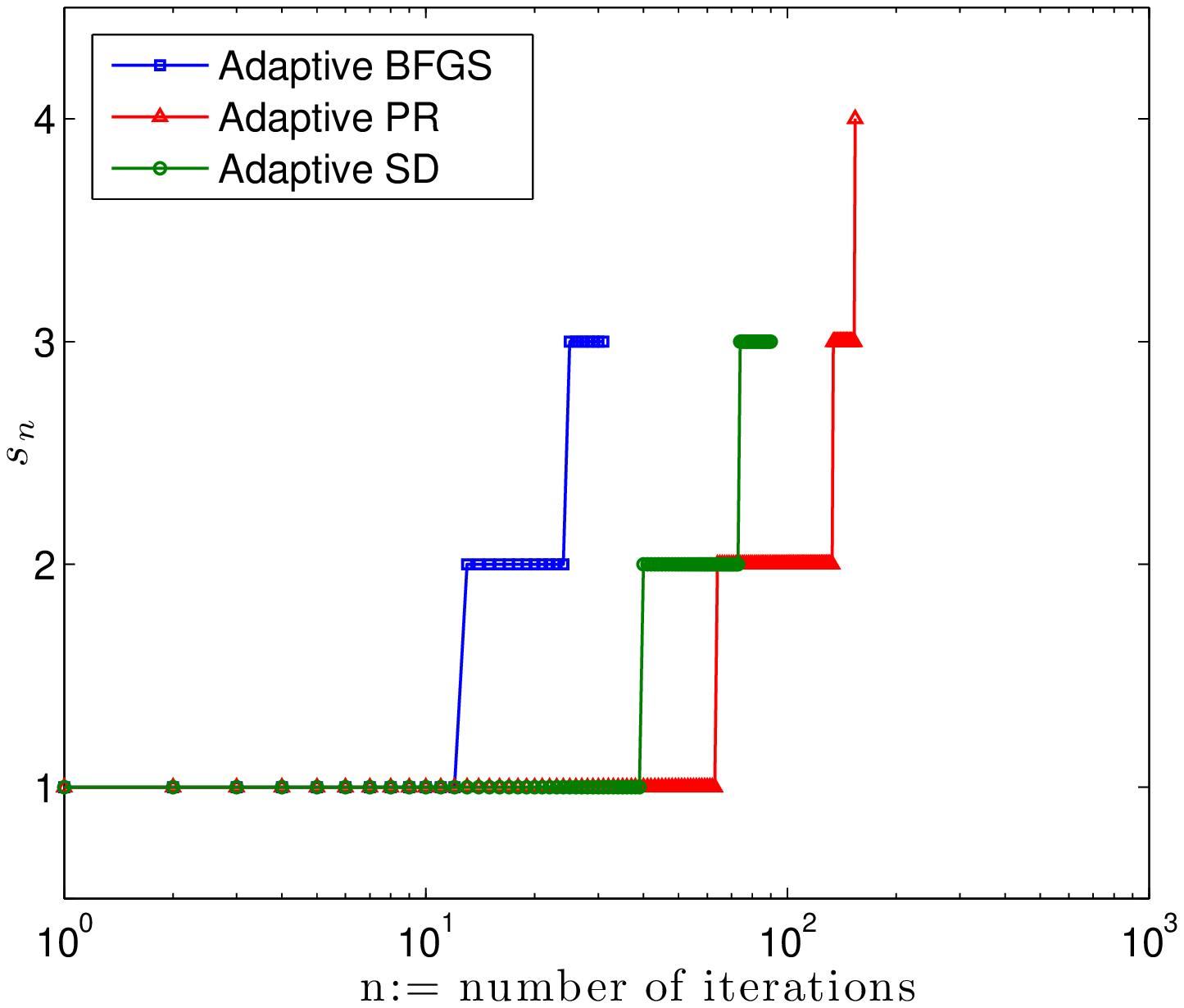}}
\subfloat[Evolution of $p$]{\includegraphics[height=6cm, width=8cm]{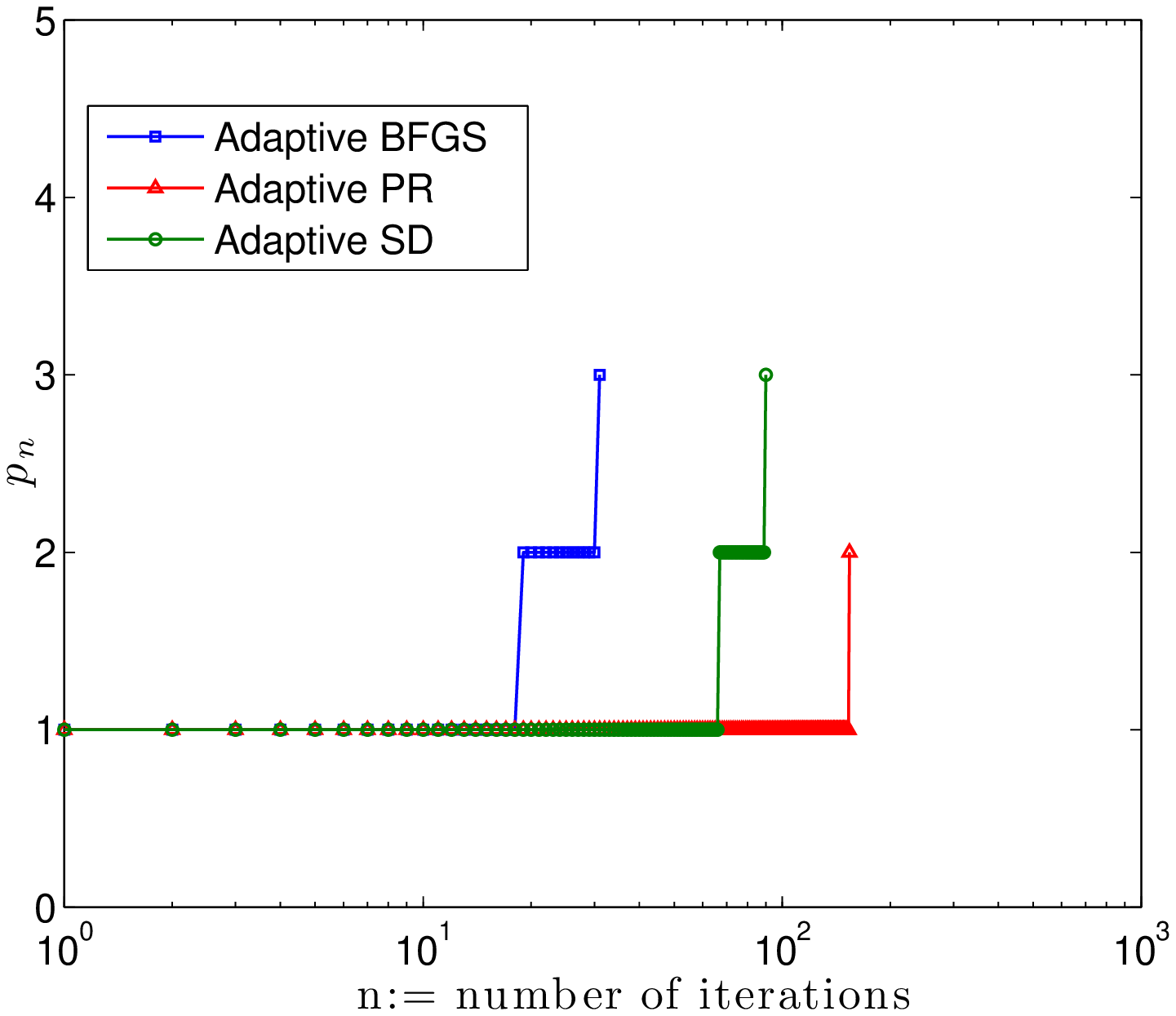}} \\
\subfloat[Convergence of the algorithms]{\includegraphics[height=6cm, width=8cm]{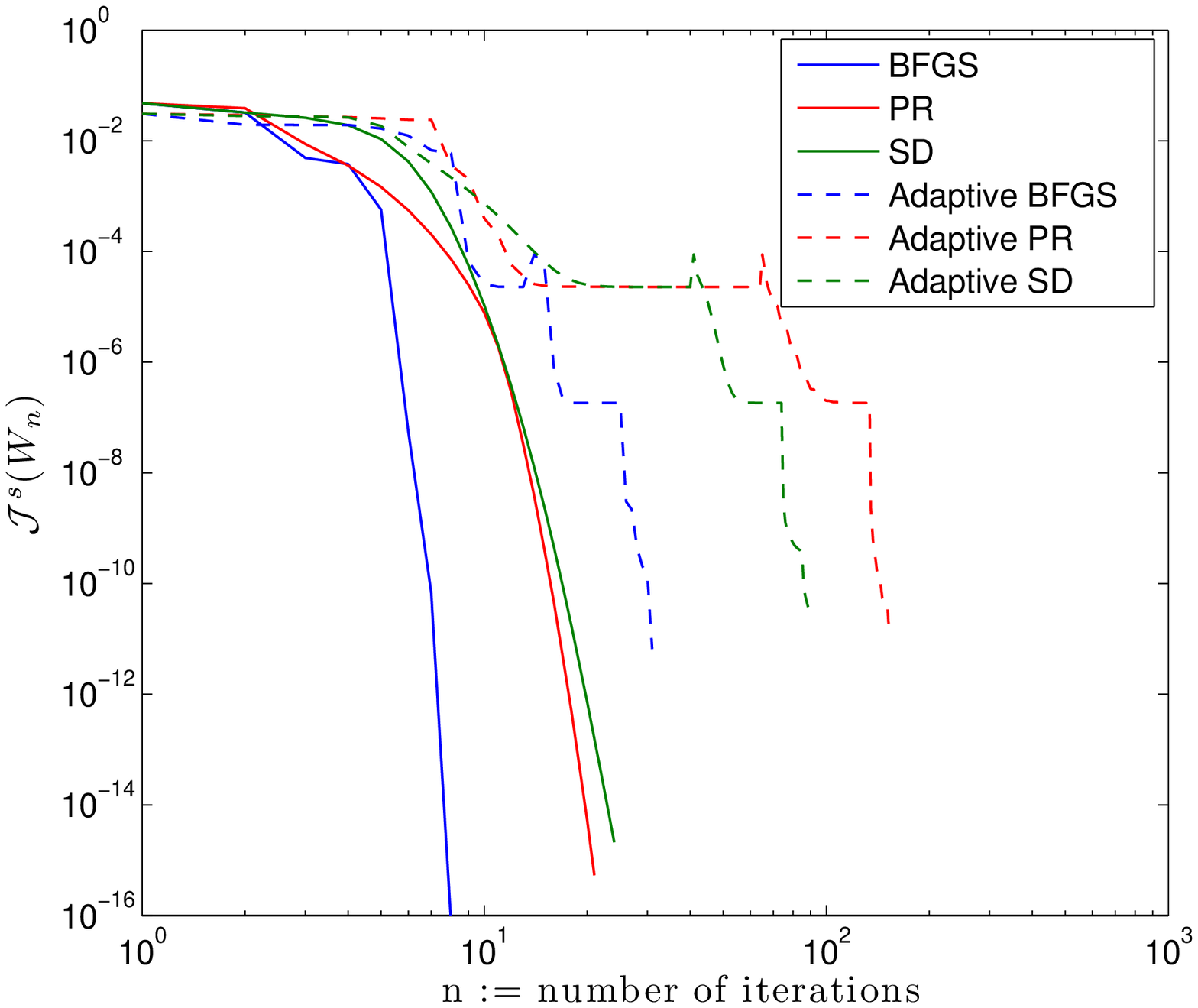}}
\caption{Recovery of the cosine potential}
\label{fig:1D_simple}
\end{figure}

In the second test case, we try to recover a more complex potential with $p_\rt = 8$ (see Figure~\ref{fig:1D_multimodal}). In this case, all the algorithms reproduce well the first bands, but fail to recover the potential. Actually, we see how different methods can lead to different local minima for the functional $\cJ$. This reflects the complex landscape of this function.
\begin{figure}[ht]
\centering 
\captionsetup{justification=centering}
\subfloat[Potentials]{\includegraphics[height=6cm, width=8cm]{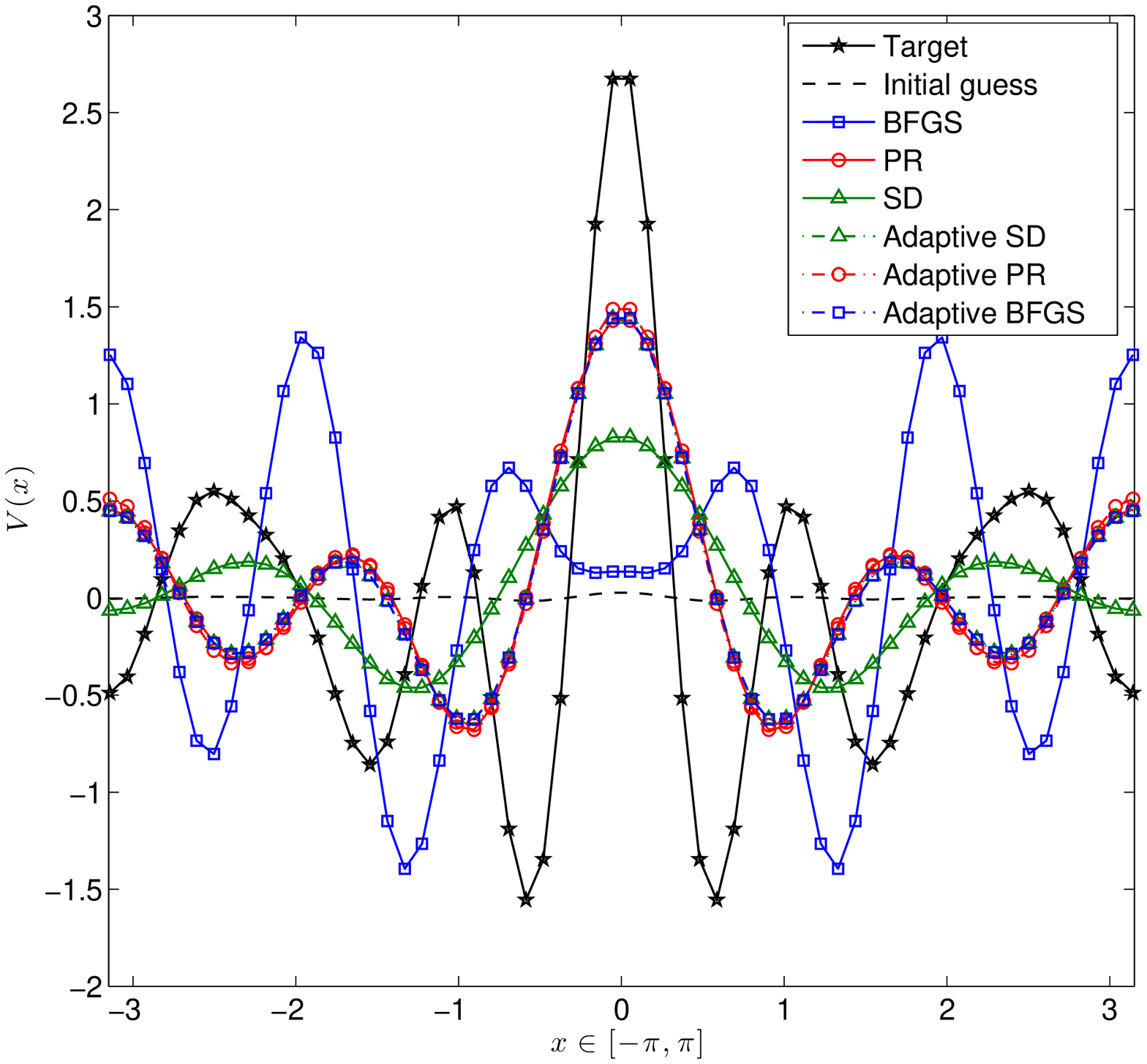}}
\subfloat[Bands]{\includegraphics[height=6cm, width=8cm]{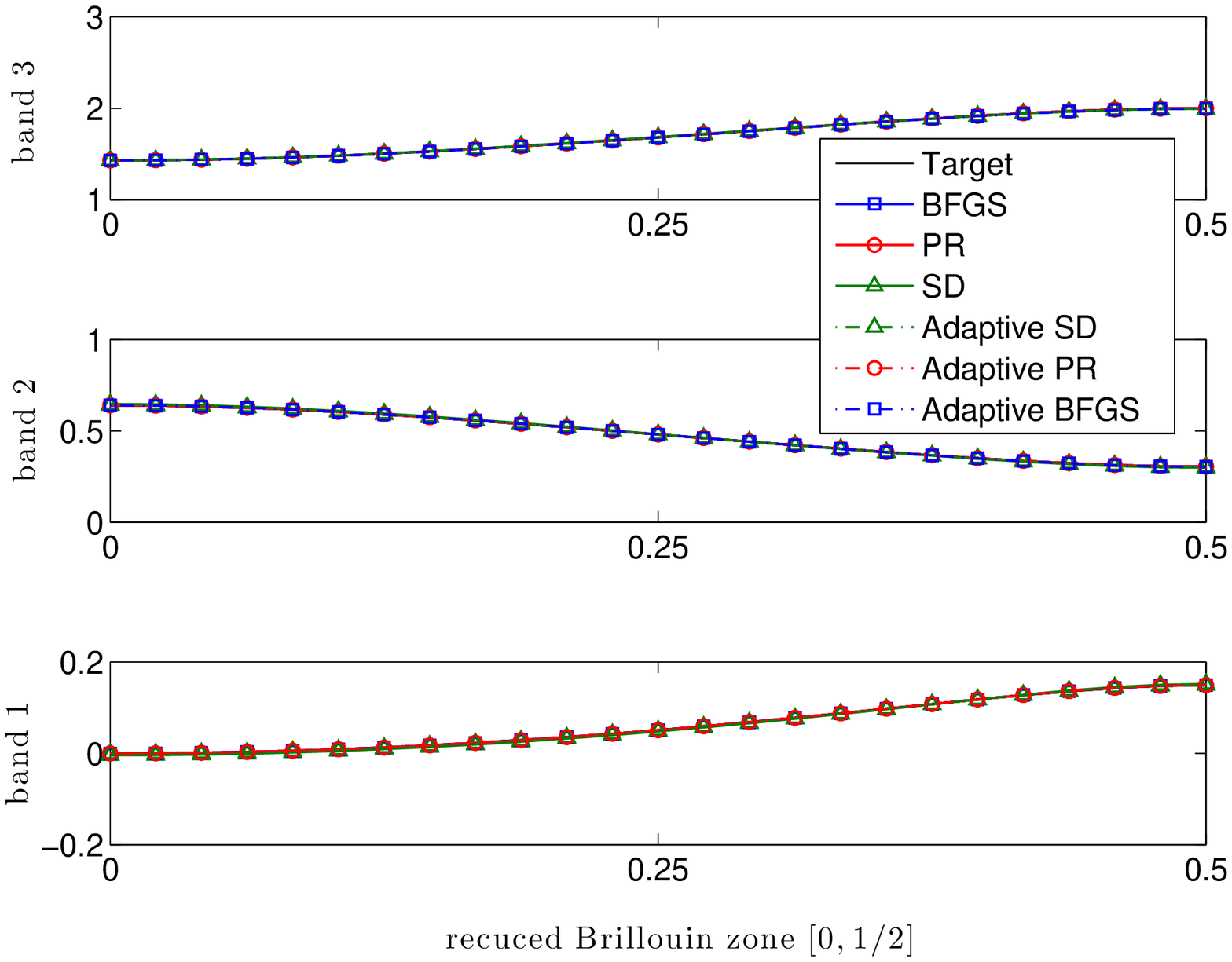}} \\
\subfloat[Evolution of $s$]{\includegraphics[height=6cm, width=8cm]{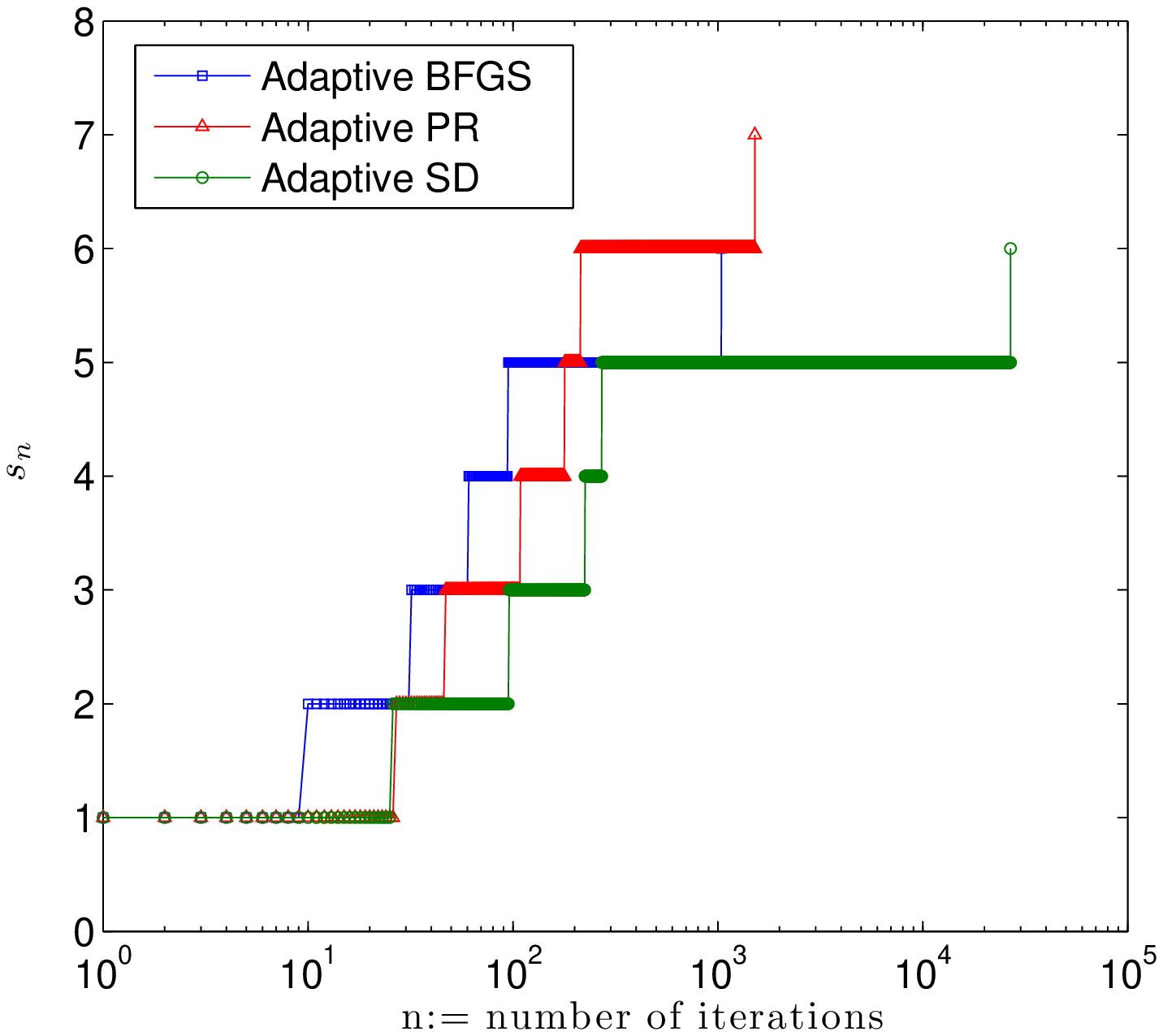}}
\subfloat[Evolution of $p$]{\includegraphics[height=6cm, width=8cm]{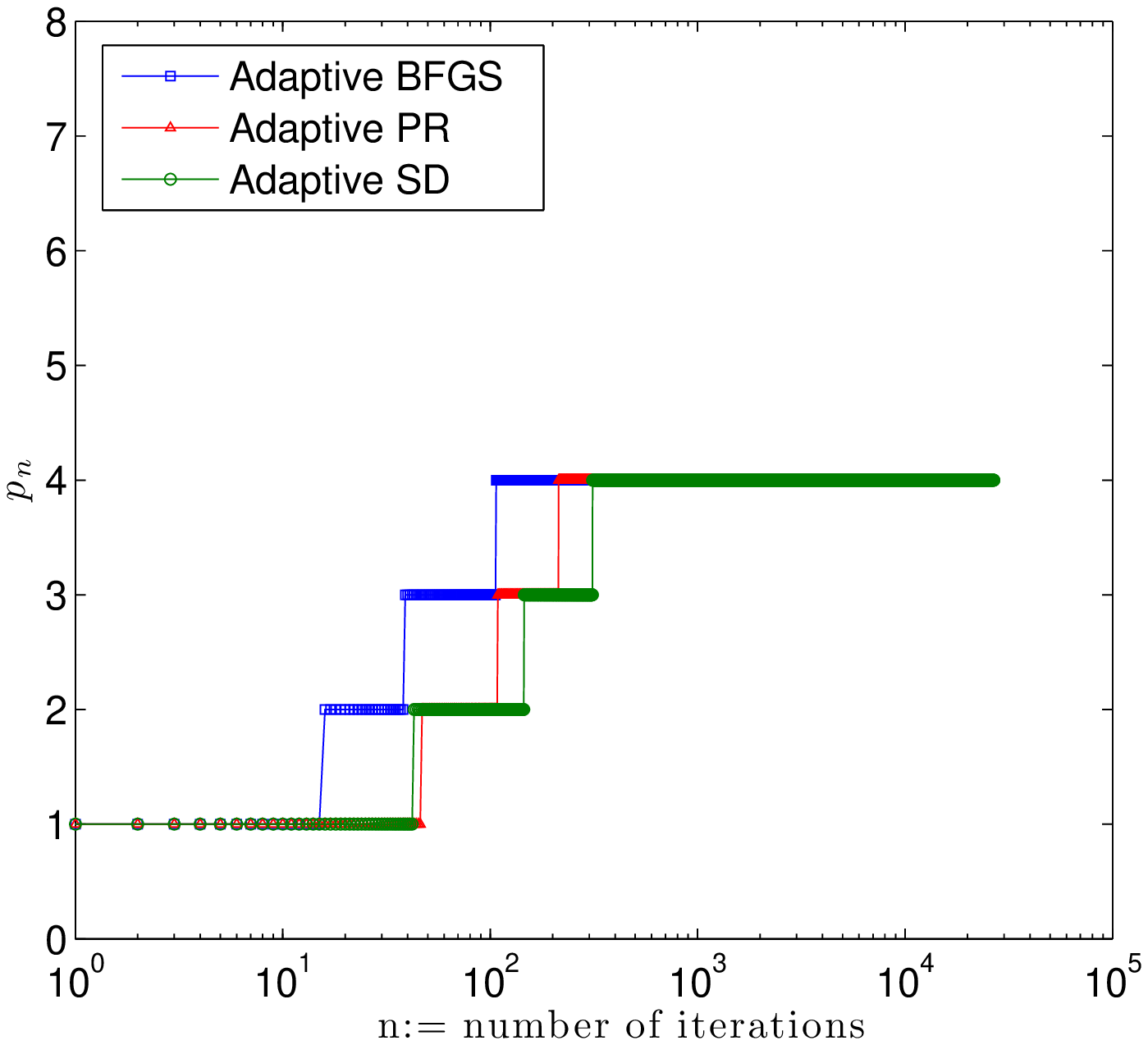}} \\
\subfloat[Convergence of the algorithms]{\includegraphics[height=6cm, width=8cm]{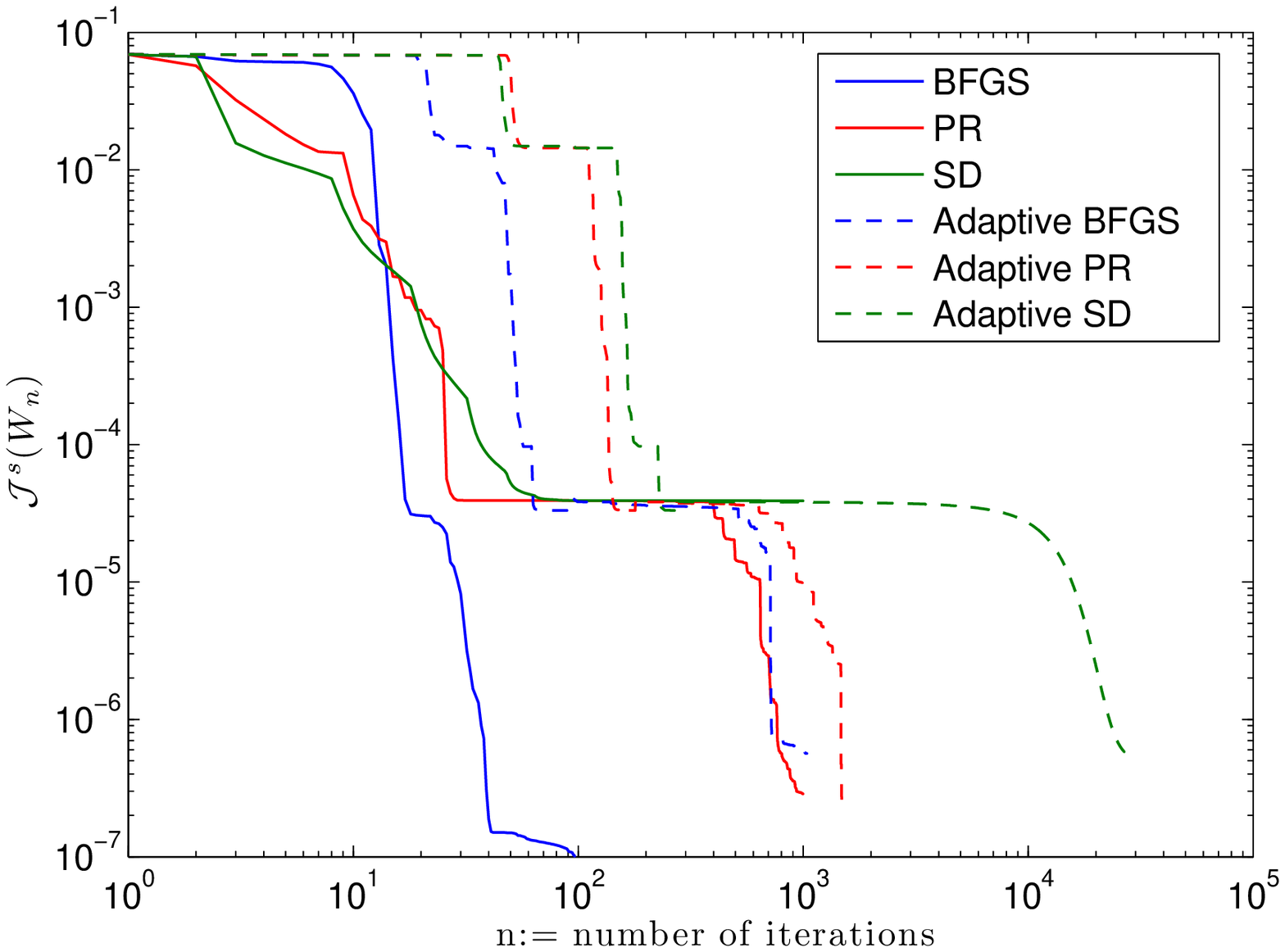}}
\caption{Recovery of an oscillating potential.}
\label{fig:1D_multimodal}
\end{figure}

We end this section by reporting results obtained with the different algorithms, and for different target potential $V_\rt \in Y_{p_\rt}$ with $p_\rt=1,4,8,12$ (see Table~\ref{tab:results_1D}). In this table, $N$ denotes the number of iterations, $s_N$ and $p_N$ are the values of the parameters $s$ and $p$ at the last iteration (in particular, for the naive algorithms, we have $s_N = s_\rt = 20$ and $p_N = p_\rt$). Lastly, for each algorithm \textbf{algo}, we define a relative CPU time 
\[
 \tau_{\rm \textbf{algo}} = \dfrac{t_{\rm \textbf{algo}}}{t_{\rm \textbf{SD}}},
\]
where $ t_{\rm \textbf{algo}}$ is the CPU time consumed by the algorithm \textbf{algo} and $t_{\rm \textbf{SD}}$ is the CPU time consumed by the classical steepest descent. 
In particular, $\tau_{\rm \textbf{SD}}=1$.
\begin{table}[H]
\centering
\begin{tabular}{|c|c|c|c|c|c|c|c|}
\hline 
\multirow{2}{*}{$p_\rt$}		    & -	&\multicolumn{2}{c|}{\textbf{BFGS}}  	&\multicolumn{2}{c|}{\textbf{PR}} 	&\multicolumn{2}{c|}{\textbf{SD}} \\  \cline{2-8}
& - 		     &\textbf{naive} 				&\textbf{adaptive}  &\textbf{naive}  			&\textbf{adaptive} 	 &\textbf{naive}  		    &\textbf{adaptive}  \\  \hline \hline
\multirow{2}{*}{$1$}
&$\tau$ 			&0.259	&1.176	&0.929	&1.320	&\textcolor{red}{1}	&1.255 	\\  \cline{2-8}
&$N$ 			&8		&31		&21		&154		&24		&90 		\\  \cline{2-8}
&$s_N$  			&\textcolor{red}{20}		&3 		& \textcolor{red}{20} 		&4		&\textcolor{red}{20}		&3		\\  \cline{2-8}
&$p_N$  			&\textcolor{red}{1}		&3 		& \textcolor{red}{1} 		&2		&\textcolor{red}{1}		&3		\\  \hline \hline 

%


\multirow{2}{*}{$4$}
&$\tau$ 	&0.070	&0.009	&0.464	&0.281	&\textcolor{red}{1}	&0.259 \\  \cline{2-8}
&$N$	&54		&1424	&1927	&7091	&8453	&19095 \\  \cline{2-8}
&$s_N$	&\textcolor{red}{20}		&8		&\textcolor{red}{20}		&7		&\textcolor{red}{20}		&5		\\  \cline{2-8}
&$p_N$  	&\textcolor{red}{4}		&5 		& \textcolor{red}{4}		&3		&\textcolor{red}{4}		&3		\\  \hline \hline 

\multirow{2}{*}{$8$}
&$\tau$ 		&0.470	&0.151	&1.090	&0.144	&\textcolor{red}{1}	&0.519 \\ \cline{2-8}
&$N$ 		&553		&1041	&1023	&1515	&7326	&26783 \\ \cline{2-8}
&$s_N$ 		&\textcolor{red}{20}		&6		&\textcolor{red}{20}		&7		&\textcolor{red}{20}		&6        \\ \cline{2-8}
&$p_N$		&\textcolor{red}{8}		&4		&\textcolor{red}{8}		&4		&\textcolor{red}{8}		&4    \\ \hline \hline


\multirow{2}{*}{$12$}
&$\tau$	&0.007	&0.001	&0.054	&0.004	&\textcolor{red}{1}	&0.044  \\ \cline{2-8}
&$N$ 	&765		&2474	&2413	&2727	&50312	&34865 \\ \cline{2-8}
&$s_N$	&\textcolor{red}{20}		&9		&\textcolor{red}{20}		&9		&\textcolor{red}{20}		&9	     \\ \cline{2-8}
&$p_n$	&\textcolor{red}{12}		&8		&\textcolor{red}{12}		&8		&\textcolor{red}{12}		&8      \\ \hline 

%
\end{tabular}
\caption{Results for recovery test with different algorithms. Red values are reference values.}
\label{tab:results_1D}
\end{table}

We notice that although the adaptive approach requires more iterations to converge, it is usually faster than the naive one. As we already mentioned, this is due to the fact that most of
the iterations are performed with small values of $p$ and $s$, and are therefore faster. Moreover, we notice that the adaptive algorithms tend to find an optimised 
potential which $p_N \leq p_\rt$, i.e. a less oscillatory potential than the target one.

 \section*{Ackowledgements}
 The authors heartily thank \'Eric Canc\`es, Julien Vidal, Damiano Lombardi and Antoine Levitt for their great help in this work and for inspiring discussions. 
 The IRDEP institute is acknowledged for funding.

 \appendix
 \section{A posteriori error estimator for the eigenvalue problem}
 \label{sec:appendixA}
 We present in this appendix the a posteriori error estimator for eigenvalue problems that we use in Section~\ref{sec:numerical_results}. More details about this estimator are given in~\cite{a_posteriori}.

 \medskip

 Let $\cH$ be a finite dimensional space of size $N_{\rm ref}$ and let $A$ be a self-adjoint operator on $\cH$. In our case, $\cH$ is some $X_{s_{\rm ref}}$ for some 
 large $s_{\rm ref} \gg 1$, and $A = A^{V,s_{\rm ref}}_q$. The eigenvalues of $A$, counting multiplicities are denoted by $\varepsilon_1 \le \varepsilon_2 \le \cdots \le \varepsilon_{N_{\rm ref}}$.

 \medskip

 For $N \ll N_{\rm ref}$, we consider $X_N$ a finite dimensional subspace of $\cH$. We denote by $\Pi_{X_N}$ the orthogonal projection on $X_N$, and by $A^N := \Pi_{X_N} A \Pi_{X_N}^*$. 
 The eigenvalues of $A^N$ are denoted by $\varepsilon_1^N \le \varepsilon_2^N \le \cdots \le \varepsilon_{N}^N$. Let us also denote by $\left( u_m^N \right)_{1 \le j \le N}$ a corresponding orthogonal basis of $X^N$, so that
 \[
 \forall 1 \le m \le N, \quad A^N u_m^N = \varepsilon_m^N u_m^N.
 \]
 We recall that, from the min-max principle, it holds that $\varepsilon_{m} \le \varepsilon_{m}^N$. A certified a posteriori error estimator for the $m$-th eigenvalue is a non-negative real number $\Delta_m^N \in \R_+$ such that 
 $$
 \varepsilon_{m}^N - \varepsilon_{m} \leq \Delta_{m}^N.
 $$
 We also require that the expression of $\Delta_m^N$ only involves the approximated eigenpair $\varepsilon_{m}^N$ and $u_{m}^N$ (and not $\varepsilon_m$). 



 \begin{proposition}\label{prop:posteriori}
 Assume that $\varepsilon_m$ (resp. $\varepsilon_m^N$) is a non-degenerate eigenvalue of $A$ (resp. $A^N$), and that 
 \begin{equation}\label{eq:assposter}
 0 < \varepsilon_m^N - \varepsilon_m < \mbox{\rm dist} \left(\varepsilon_m^N, \sigma(A)\setminus\left\{\varepsilon_m\right\} \right).
 \end{equation}
 Let $\lambda_m < \varepsilon_m$. Then there exists $\delta_m > 0$ such that, for all $0 \le \delta < \delta_m$, we have
 \begin{equation}\label{eq:apost}
 \varepsilon_m^N - \varepsilon_m \leq \left\langle r_m^N,  \left(A -c_\delta\right)^{-1} \left(A -d_\delta\right)\left(A -c_\delta\right)^{-1} r_m^N\right\rangle,
 \end{equation}
 where we set $c_\delta :=  \varepsilon_m^N + \delta $, $d_\delta:= \lambda_m + \delta$, and where $r_m^N:= \left( A  - \varepsilon_m^N \right) u_m^N$ is the residual.
 \end{proposition}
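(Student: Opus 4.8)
The plan is to reduce the operator inequality~\eqref{eq:apost} to a scalar one by diagonalising $A$. Since $A$ is self-adjoint on the finite-dimensional space $\cH$, I would fix an orthonormal eigenbasis $(\phi_i)$ with $A\phi_i = \varepsilon_i \phi_i$ and expand the discrete eigenvector as $u_m^N = \sum_i a_i \phi_i$, with $\sum_i |a_i|^2 = 1$. Two elementary moment identities will drive the argument: this normalisation, and the Rayleigh-quotient identity $\sum_i \varepsilon_i |a_i|^2 = \langle u_m^N, A u_m^N\rangle = \langle u_m^N, A^N u_m^N\rangle = \varepsilon_m^N$, where I use that $u_m^N \in X_N$ and that $\Pi_{X_N}$ is an orthogonal projection. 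The non-degeneracy of $\varepsilon_m^N$ makes $u_m^N$ well-defined, while the distance hypothesis~\eqref{eq:assposter} forces $\varepsilon_m^N \notin \sigma(A)$ (otherwise the distance to $\sigma(A)\setminus\{\varepsilon_m\}$ would vanish); hence $c_0 = \varepsilon_m^N$ and, for $\delta$ small, $c_\delta = \varepsilon_m^N + \delta$ stay off the spectrum, so the resolvents $(A - c_\delta)^{-1}$ are well-defined.

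Next I would write both sides of~\eqref{eq:apost} as spectral sums. Combining the two moment identities gives $\varepsilon_m^N - \varepsilon_m = \sum_i (\varepsilon_i - \varepsilon_m)|a_i|^2$. On the other side, expanding the residual $r_m^N = (A - \varepsilon_m^N)u_m^N = \sum_i (\varepsilon_i - \varepsilon_m^N)a_i \phi_i$ and letting every operator act diagonally, the right-hand side of~\eqref{eq:apost} becomes $\sum_i g_\delta(\varepsilon_i)|a_i|^2$, where $g_\delta(t) := (t - d_\delta)\,(t - \varepsilon_m^N)^2 (t - c_\delta)^{-2}$. Thus~\eqref{eq:apost} is equivalent to $\sum_i \big[\,g_\delta(\varepsilon_i) - (\varepsilon_i - \varepsilon_m)\,\big]\,|a_i|^2 \ge 0$, and since the weights $|a_i|^2$ are non-negative it suffices to establish the pointwise bound $g_\delta(\varepsilon_i) \ge \varepsilon_i - \varepsilon_m$ for every eigenvalue $\varepsilon_i \in \sigma(A)$.

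The crucial observation is that at $\delta = 0$ the two quadratic factors in $g_\delta$ cancel: since $c_0 = \varepsilon_m^N$, one has $g_0(t) = t - \lambda_m$ for all $t \ne \varepsilon_m^N$, whence $g_0(\varepsilon_i) - (\varepsilon_i - \varepsilon_m) = \varepsilon_m - \lambda_m$ for \emph{every} $i$. By the hypothesis $\lambda_m < \varepsilon_m$ this is a fixed strictly positive constant, independent of $i$ (equivalently, the estimator at $\delta=0$ is just $\langle u_m^N,(A - \lambda_m)u_m^N\rangle = \varepsilon_m^N - \lambda_m \ge \varepsilon_m^N - \varepsilon_m$). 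For each of the finitely many eigenvalues $\mu \in \sigma(A)$, the map $\delta \mapsto g_\delta(\mu) - (\mu - \varepsilon_m)$ is continuous near $0$ (the denominator $(\mu - c_\delta)^2$ stays bounded away from zero there) and equals $\varepsilon_m - \lambda_m > 0$ at $\delta = 0$. Taking the minimum over the finite spectrum and invoking continuity produces a threshold $\delta_m > 0$ below which all the pointwise inequalities hold simultaneously, which is exactly the claim.

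The remaining points are bookkeeping. I must choose $\delta_m$ small enough that $c_\delta$ stays in the spectral gap above $\varepsilon_m^N$: here~\eqref{eq:assposter} is used, as it forces every eigenvalue other than $\varepsilon_m$ to lie at distance $> \varepsilon_m^N - \varepsilon_m$ from $\varepsilon_m^N$, so $c_\delta \notin \sigma(A)$ for $0 \le \delta < \varepsilon_m^N - \varepsilon_m$; and small enough that the finitely many continuous functions above stay positive. I expect the only genuinely conceptual step to be the cancellation at $\delta = 0$, which collapses the resolvent expression to the crude bound $\varepsilon_m^N - \lambda_m$; the substance of the proposition is then the perturbative statement that validity persists for a range of $\delta > 0$, and this is precisely what the finite-dimensional continuity argument supplies.
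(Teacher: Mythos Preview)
Your argument is correct and follows essentially the same route as the paper: both proofs diagonalise $A$ and reduce~\eqref{eq:apost} to the pointwise scalar inequality $(\varepsilon_i - d_\delta)/(\varepsilon_i - c_\delta)^2 \ge (\varepsilon_i - \varepsilon_m)/(\varepsilon_i - \varepsilon_m^N)^2$ on $\sigma(A)$ (the paper first records the exact identity $\varepsilon_m^N - \varepsilon_m = \langle r_m^N,(A-\varepsilon_m^N)^{-1}(A-\varepsilon_m)(A-\varepsilon_m^N)^{-1} r_m^N\rangle$, which is your coordinate expansion in operator form). The only difference is in the final step: you invoke continuity in $\delta$ and finiteness of the spectrum to obtain some $\delta_m>0$, whereas the paper supplies the explicit value $\delta_m = \min\bigl(\varepsilon_m - \lambda_m,\ \mathrm{dist}(\varepsilon_m^N,\sigma(A)\setminus\{\varepsilon_m\}) - (\varepsilon_m^N-\varepsilon_m)\bigr)$.
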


 \begin{proof}
 Assumption (\ref{eq:assposter}) implies that $\varepsilon_m^N \notin \sigma\left( A \right)$, so that $\left( A  - \varepsilon_m^N \right)$ is invertible. From the fact that $\langle u_m^N , A u_m^N \rangle = \varepsilon_m^N$, and the definition of the residual, it holds that
 \begin{equation}\label{eq:firsteq}
 \varepsilon_m^N - \varepsilon_m = \left\langle r_m^N , \left(A -\varepsilon_m^N\right)^{-1} \left(A - \varepsilon_m \right)\left(A -\varepsilon_m^N\right)^{-1} r_m^N \right\rangle.
 \end{equation}
 Thus, a sufficient condition for (\ref{eq:apost}) to hold is that 
 $$
 \left(A -c_\delta\right)^{-1} \left(A -d_\delta\right)\left(A -c_\delta\right)^{-1} \geq \left(A -\varepsilon_m^N\right)^{-1} \left(A - \varepsilon_m \right)\left(A -\varepsilon_m^N\right)^{-1}.
 $$
 Thanks to the spectral decomposition of $A$, this is the case if and only if, 
 $$
 \forall 1 \le \widetilde{m} \le N_{\rm ref}, \quad \frac{\varepsilon_{\widetilde{m}} - d_\delta}{\left( \varepsilon_{\widetilde{m}} - c_\delta\right)^2} \geq \frac{\varepsilon_{\widetilde{m}} - \varepsilon_m}{\left( \varepsilon_{\widetilde{m}} - \varepsilon_m^N\right)^2}. 
 $$
 Denoting by $\eta:= \mbox{\rm dist} \left(\varepsilon_m^N, \sigma(A)\setminus\left\{\varepsilon_m\right\}\right) - \left(\varepsilon_m^N - \varepsilon_m\right)$, 
 this holds true as soon as $\delta \leq \delta_m:= \min\left(\varepsilon_m - \lambda_m, \eta\right)$. The result follows.
 \end{proof}

 In order to use the left-side of~\eqref{eq:apost} as an a posteriori estimator, we need to choose $\lambda_m < \varepsilon_m$ and $\delta_m > 0$. For the choice of $\lambda_m$, we follow~\cite{Wolkowitz}, and notice that 
 $$
 \varepsilon_m \geq \lambda_m:= \mu - \left( \frac{N_{\rm ref} -m -1}{m+1} \right)^{1/2} \sigma,
 $$
 where we set
 $$
 \mu :=\frac{1}{N_{\rm ref}} \Tr A \quad \text{and} \quad \sigma^2 := \frac{1}{N_{\rm ref}} \Tr A^2 - \mu^2. 
 $$
 For the choice of $\delta_m$, we chose the simple rule
 $$
 \delta_m  = \theta \left(\varepsilon_m^N -\kappa\right) \quad \text{with} \quad 0 < \theta \ll 1 \quad \text{ and } \kappa \in \R \quad \text{independent of $m$}.
 $$
 The real number $\kappa$ is chosen to be an a priori lower bound of the lowest eigenvalue $\varepsilon_1$ of $A$.
 This choice is heuristic in the sense that we cannot guarantee that the assumptions of Proposition~\ref{prop:posteriori} are satisfied. However, the encouraging 
 numerical results we obtain below motivated our choice to use such an estimator (see Section~\ref{sec:posternum}).

 \paragraph{Numerical test}\label{sec:posternum}

 To illustrate the efficiency of our heuristic, we tested it to compute the first bands of the Hill's operator $A^V$ with
 $$
 V(x) =  \sum \limits_{k=-3}^3 \hat{V}_k e_k, \quad \text{where} \quad
 \hat{V}_0 = 2 \quad \text{and} \quad \overline{\hat{V}_{-1}} =\overline{\hat{V}_{-2}} =  \hat{V}_1 = \hat{V}_2 = 1 + 0.5 \, i.
 $$
 The reference operator is $A := A_{q}^{V,s_{\rm ref}}$ with $s_{\rm ref} = 250$, and the first three bands are computed on the space $X^s$ defined in~\ref{def:X_s} with $s = 6$. 
 We plot in Figure~\ref{fig:posteriori_1D} the true error $\varepsilon_{q,m}^{V,s} - \varepsilon_{q,m}^{V,s_{\rm ref}}$ for $m = 1,2,3$, and the corresponding a posteriori error with $\kappa = 0$ and different values of $\theta$ (namely $\theta = 0.1, 0.5, 1$). We observe that our estimator is sharp for a large range of $\theta$.

 \iffigure
 \begin{figure}[ht]
 \centering
 \subfloat[Potential $V$]{\includegraphics[width=7cm, height = 6cm]{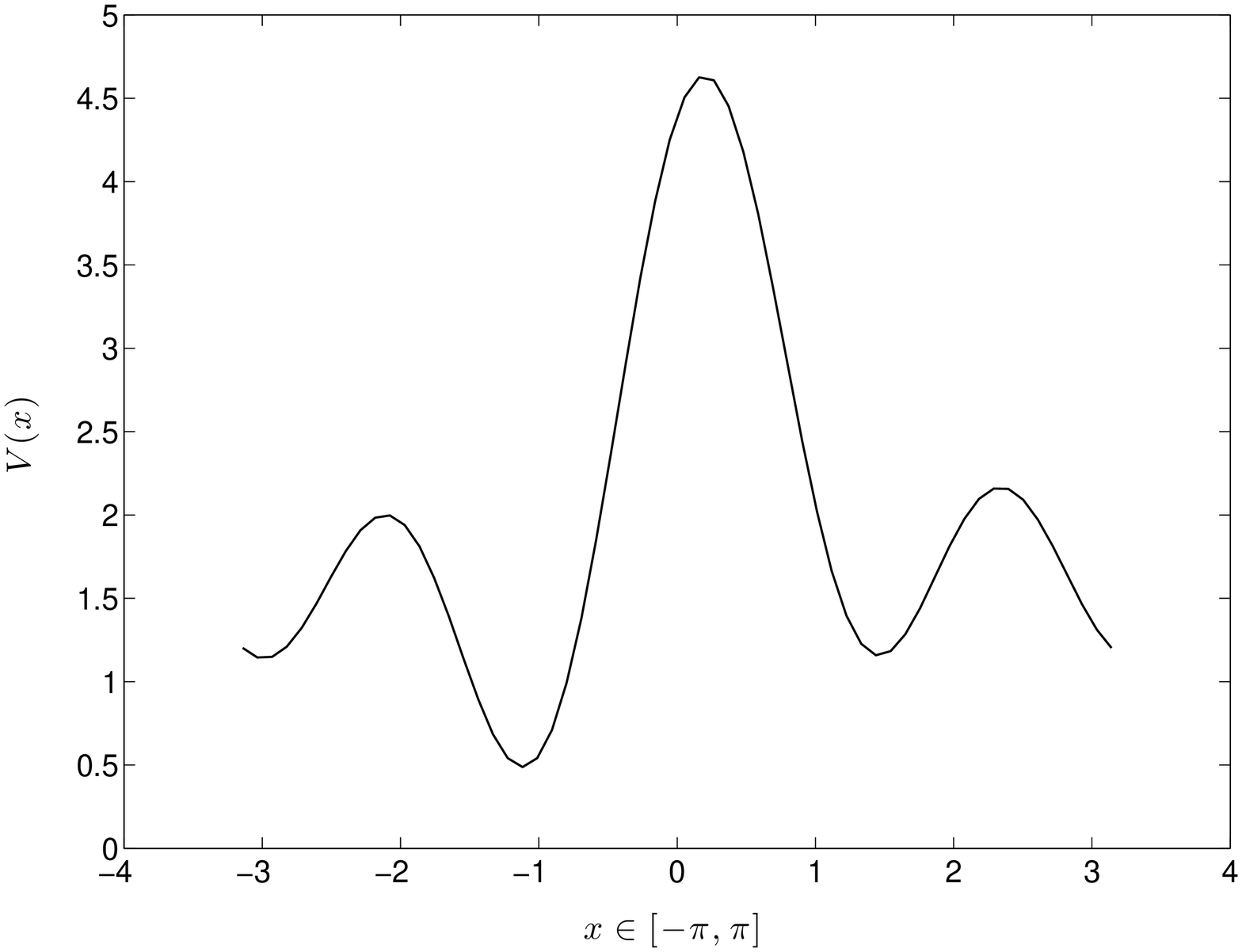}} 
 \subfloat[$m=1$]{\includegraphics[width=7cm,height = 6cm]{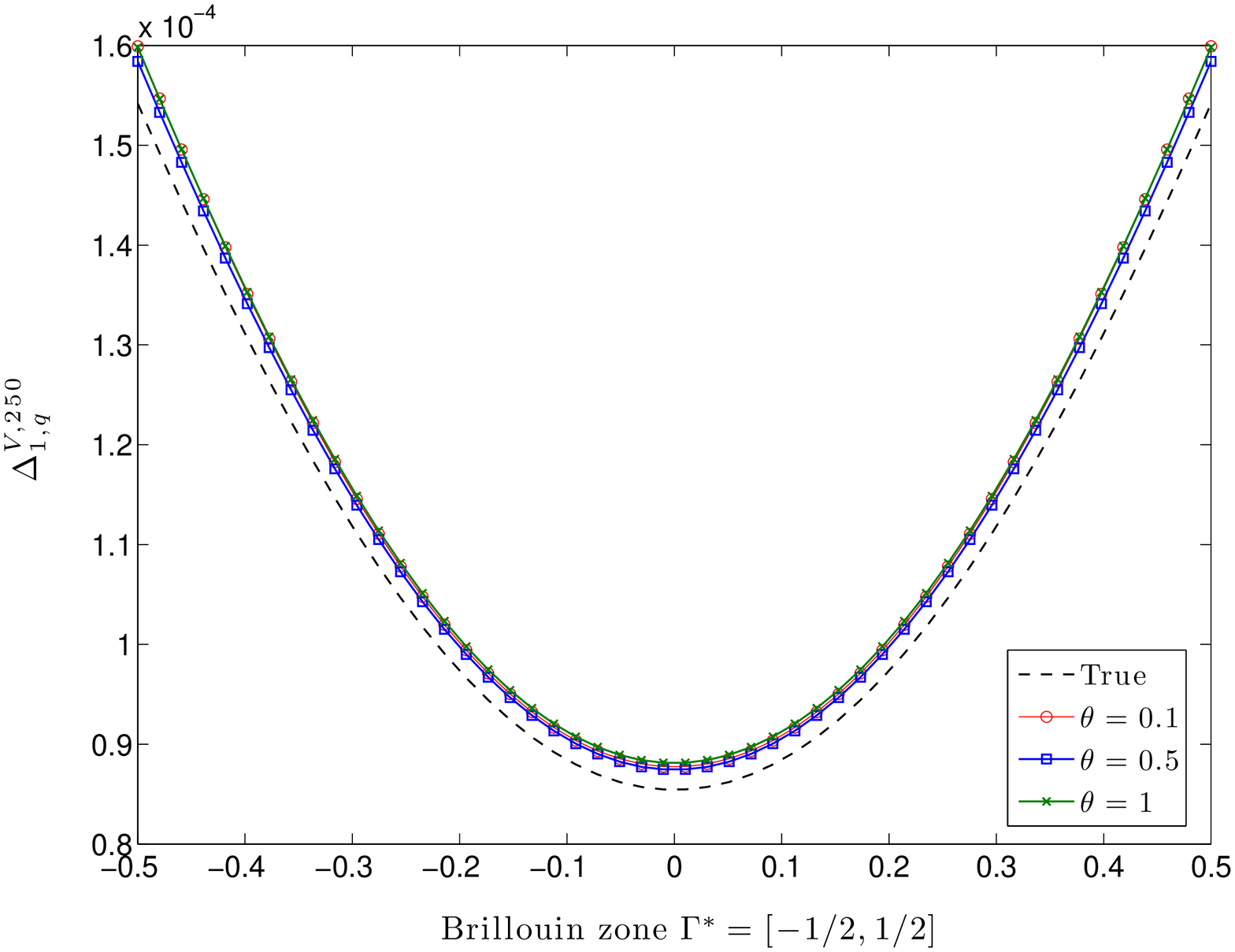}} \\
 \subfloat[$m=2$]{\includegraphics[width=7cm,height = 6cm]{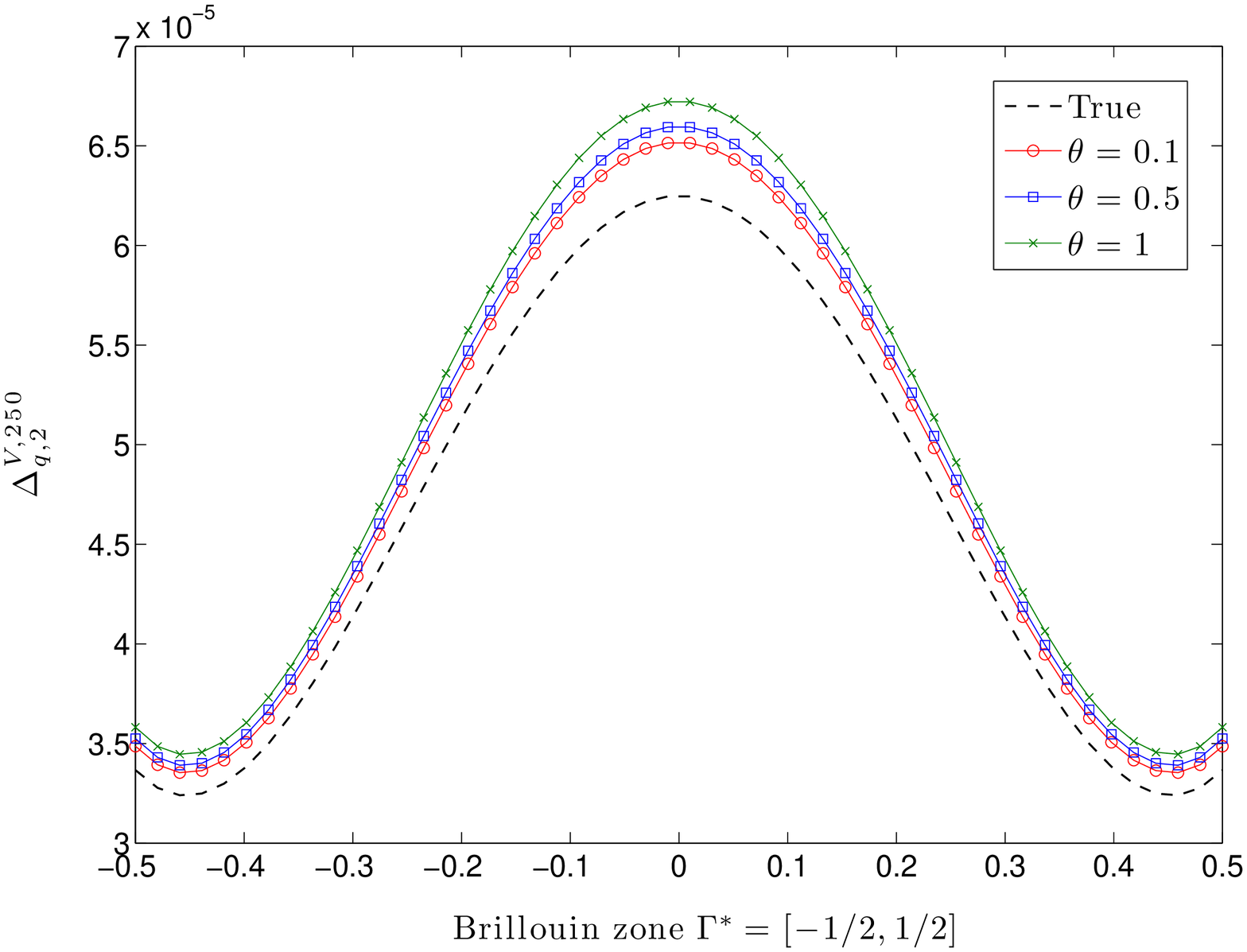}} 
 \subfloat[$m=3$]{\includegraphics[width=7cm,height = 6cm]{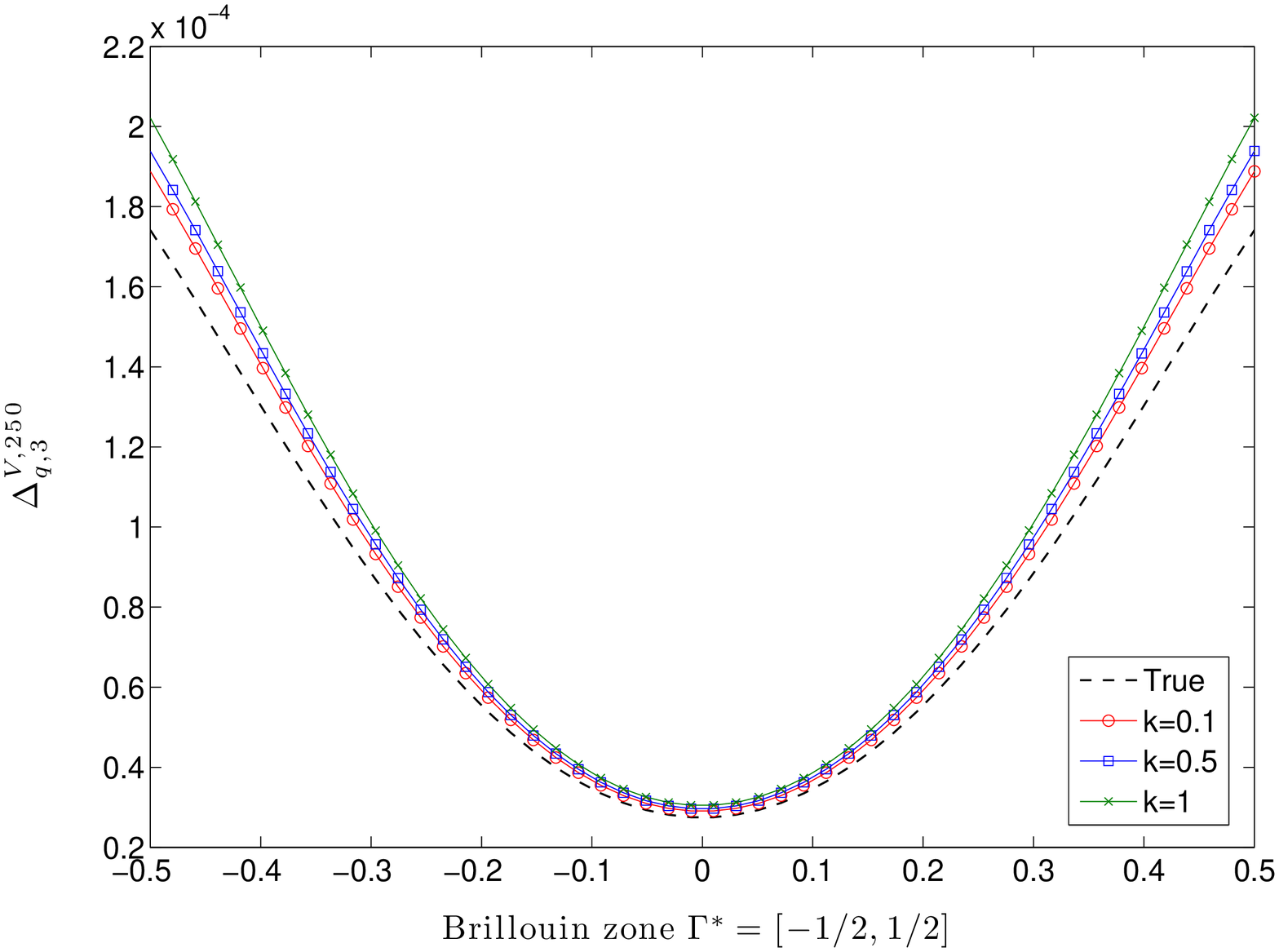}} 
 \caption{Numerical validation of the a posteriori error estimator proposed in Appendix~\ref{sec:appendixA}.}  
 \label{fig:posteriori_1D}
 \end{figure}
 \fi

 \bibliographystyle{plain}
 \bibliography{bands_biblio}

\begin{thebibliography}{10}

\bibitem{BakhtaPhD}
A.~Bakhta.
\newblock {\em Mathematical models and numerical simulation of photovoltaic
  materials}.
\newblock PhD thesis, Universit\'e Paris Est - Ecole des Ponts ParisTech, 2017.

\bibitem{a_posteriori}
A.~Bakhta and D.~Lombardi.
\newblock An a posteriori error estimator based on shifts for positive
  hermitian eigenvalue problems.
\newblock {\em https://hal.inria.fr/hal-01584180/}, 2017.

\bibitem{Bonnans}
J.F. Bonnans, J.Ch. Gilbert, C.~{Lemar{\'e}chal}, and C.~{Sagastiz{\'a}bal}.
\newblock {\em Numerical Optimization}.
\newblock Springer Verlag, 2003.

\bibitem{Dias2016}
N.C. Dias, C.~Jorge, and J.N. Prata.
\newblock One-dimensional {S}chr{\"o}dinger operators with singular potentials:
  A {S}chwartz distributional formulation.
\newblock {\em J. of Differential Equations}, 260(8):6548--6580, 2016.

\bibitem{Eskin}
G.~Eskin.
\newblock Inverse spectral problem for the {S}chr{\"o}dinger equation with
  periodic vector potential.
\newblock {\em Commun. Math. Phys}, 125(2):263--300, 1989.

\bibitem{EskinRalstonTrubowiz1}
J.~Eskin, J.~Ralston, and E.~Trubowiz.
\newblock On isospectral periodic potential in $\mathbb{R}^n$. {I}.
\newblock {\em Commun. Pure Appl. Maths.}, 37(5):647--676, 1984.

\bibitem{EskinRalstonTrubowiz2}
J.~Eskin, J.~Ralston, and E.~Trubowiz.
\newblock On isospectral periodic potential in $\mathbb{R}^n$. {II}.
\newblock {\em Commun. Pure Appl. Maths.}, 37(6):715--753, 1984.

\bibitem{Evans}
L.C. Evans.
\newblock {\em Partial Differential Equations}.
\newblock Graduate studies in mathematics. American Mathematical Society, 1998.

\bibitem{FreilingYurko}
G.~Freiling and V.~Yurko.
\newblock {\em Inverse {S}turm-{L}iouville problems and their applications}.
\newblock Nova Science Publishers, 2001.

\bibitem{gesztesy2006spectral}
F.~Gesztesy and M.~Zinchenko.
\newblock On spectral theory for {S}chr{\"o}dinger operators with strongly
  singular potentials.
\newblock {\em Math. Nachr.}, 279(9-10):1041--1082, 2006.

\bibitem{hryniv2001schr}
R.O. Hryniv and Y.V. Mykytyuk.
\newblock 1-{D} {S}chr\"odinger operators with periodic singular potentials.
\newblock {\em Methods Funct. Anal. Topology}, 7(4):31--42, 2001.

\bibitem{hryniv2003inverse}
R.O. Hryniv and Y.V. Mykytyuk.
\newblock Inverse spectral problems for {S}turm-{L}iouville operators with
  singular potentials.
\newblock {\em Inverse Problems}, 19(3):665, 2003.

\bibitem{hryniv2003inverse3}
R.O. Hryniv and Y.V. Mykytyuk.
\newblock Inverse spectral problems for {S}turm-{L}iouville operators with
  singular potentials. {III}. {R}econstruction by three spectra.
\newblock {\em J. Math. Anal. Appl.}, 284(2):626--646, 2003.

\bibitem{hryniv2004half}
R.O. Hryniv and Y.V. Mykytyuk.
\newblock Half-inverse spectral problems for {S}turm-{L}iouville operators with
  singular potentials.
\newblock {\em Inverse Problems}, 20(5):1423, 2004.

\bibitem{hryniv2004inverse2}
R.O. Hryniv and Y.V. Mykytyuk.
\newblock Inverse spectral problems for {S}turm-{L}iouville operators with
  singular potentials. {II}. {R}econstruction by two spectra.
\newblock {\em North-Holland Mathematics Studies}, 197:97--114, 2004.

\bibitem{hryniv2006inverse4}
R.O. Hryniv and Y.V. Mykytyuk.
\newblock Inverse spectral problems for {S}turm-{L}iouville operators with
  singular potentials. {IV}. {P}otentials in the {S}obolev space scale.
\newblock {\em Proceedings of the Edinburgh Mathematical Society},
  49(2):309--329, 2006.

\bibitem{kato1972schrodinger}
T.~Kato.
\newblock Schr{\"o}dinger operators with singular potentials.
\newblock {\em Israel Journal of Mathematics}, 13(1):135--148, 1972.

\bibitem{Kuchment2016}
P.~Kuchment.
\newblock An overview of periodic elliptic operators.
\newblock {\em Bull. Amer. Math. Soc.}, 53(3):343--414, 2016.

\bibitem{lieb2001analysis}
E.H. Lieb and M.~Loss.
\newblock {\em Analysis}, volume~14 of {\em Graduate studies in mathematics}.
\newblock 2001.

\bibitem{mikhailets2008}
V.~Mikhaelets and V.~Molyboga.
\newblock One-dimensional {S}chr\"odinger operators with singular periodic
  potentials.
\newblock {\em Methods Funct. Anal. Topology}, 14(2):184--200, 2008.

\bibitem{PoschelTrubowiz}
J.~P{\"o}schel and E.~Trubowiz.
\newblock {\em Inverse spectral theory. Pure and applied mathematics}.
\newblock Academic Press, 1987.

\bibitem{ReedSimon}
M.~Reed and B.~Simon.
\newblock {\em Methods of modern mathematical physics. {IV}: {A}nalysis of
  operators}.
\newblock Elsevier, 1978.

\bibitem{Trudinger1967harnack}
N.S. Trudinger.
\newblock On {H}arnack type inequalities and their application to quasilinear
  elliptic equations.
\newblock {\em Commun. Appl. Math.}, 20(4):721--747, 1967.

\bibitem{Veliev}
O.~Veliev.
\newblock {\em Multidimensional periodic {S}chr{\"o}dinger operator.
  {P}erturbation theory and applications.}
\newblock Academic Press, 2015.

\bibitem{Wolkowitz}
H.~Wolkowicz and G.P.H. Styan.
\newblock Bounds on eigenvalues using traces.
\newblock {\em Linear Algebra Appl.}, 29:471--506, 1980.

\end{thebibliography}

 \end{document}